\def\BibTeX{{\rm B\kern-.05em{\sc i\kern-.025em b}\kern-.08em
    T\kern-.1667em\lower.7ex\hbox{E}\kern-.125emX}}
\newtheorem{proposition}{Proposition}
\newtheorem{definition}{Definition}
\newtheorem{theorem}{Theorem}
\newtheorem{lemma}{Lemma}
\newtheorem{corollary}{Corollary}
\newtheorem{fact}{Fact}
\newtheorem{remark}{Remark}
\newcommand{\euclidspace}{{\mathcal{H}}}
\newcommand{\signal}[1]{{\boldsymbol{#1}}}
\newcommand{\Natural}{{\mathbb N}}
\newcommand{\norm}[1]{\left\|#1\right\|}
\newcommand{\abs}[1]{\left|#1\right|}
\newcommand{\real}{{\mathbb R}}
\newcommand{\innerprod}[2]{\left\langle{#1},{#2}\right\rangle}
\newcommand{\argmin}{\operatornamewithlimits{argmin}}
\newcommand{\gzh}{{\Gamma_0(\mathcal{H})}}
\newcommand{\prox}{\mathbf{Prox}}
\newcommand{\sprox}{{\rm s\mbox{-}Prox}}
\newcommand{\dom}{{\rm dom}~}
\newcommand{\range}{{\rm range}~}
\newcommand{\cont}{{\rm cont}~}
\newcommand{\id}{{\rm Id}}
\def\authorrefmark#1{\ensuremath{^{\textbf{#1}}}}
\begin{document}
% \receiveddate{XX Month, XXXX}
% \reviseddate{XX Month, XXXX}
% \accepteddate{XX Month, XXXX}
% \publisheddate{XX Month, XXXX}
% \currentdate{XX Month, XXXX}

\doiinfo{10.1109/OJSP.2025.3579646}

\markboth{}{Author {et al.}}

\title{Continuous Relaxation of Discontinuous Shrinkage Operator:
Proximal Inclusion and Conversion}

\author{Masahiro Yukawa\authorrefmark{1}, Senior Member, IEEE}
\affil{Department of Electronics and Electrical Engineering, Keio
University, Yokohama, JAPAN}
\corresp{Corresponding author: Masahiro Yukawa (email: yukawa@elec.keio.ac.jp).}
\authornote{This work was partially supported by JSPS Grants-in-Aid
(23K22762).}

\begin{abstract}
We present a principled way of deriving
a continuous relaxation of a given discontinuous shrinkage operator,
which is based on two fundamental results, proximal inclusion and conversion.
Using our results, the discontinuous operator is converted, via double
 inversion, to a continuous operator; more precisely, the associated
 ``set-valued'' operator is converted to a ``single-valued'' Lipschitz
 continuous operator.
The first illustrative example is the firm shrinkage operator which can
be derived as a continuous relaxation of the hard shrinkage operator.
We also derive a new operator as a continuous relaxation of 
the discontinuous shrinkage operator associated with 
the so-called reversely ordered weighted $\ell_1$ (ROWL) penalty.
Numerical examples demonstrate potential advantages of the continuous relaxation.

\end{abstract}

\begin{IEEEkeywords}
convex analysis, proximity operator,
weakly convex function

% Enter key words or phrases in alphabetical order, separated by
% commas. Using the \textit{IEEE Thesaurus} can help you find the best
% standardized keywords to fit your article. Use the \underline{\href{https://www.ieee.org/publications/services/thesaurus.html}{thesaurus
% access request form}} for free access to the \textit{IEEE Thesaurus}.
\end{IEEEkeywords}

%\IEEEspecialpapernotice{(Invited Paper)}

\maketitle

\section{Introduction}\label{sec:intro}

%{\color{red} WRITE MORE ABOUT RELEVANCE TO SP, SP-APPLICATIONS}

Triggered by the seminal work \cite{donoho94,donoho95_asymptopia} by
Donoho and Johnstone in 1994, 
the soft and hard shrinkage operators have been studied
in the context of sparse modeling \cite{donoho95,blumensath08},
which has a wide range of signal processing  applications
such as magnetic resonance imaging, radar, sparse coding, 
compressed sensing/sampling, signal dimensionality reduction, 
to name just a few \cite{foucart13}.
The soft shrinkage operator is characterized as the proximity operator
of the $\ell_1$ norm,
where the proximity operator of a convex function is nonexpansive
(Lipschitz continuous with unity constant) in general.
Because of this, it has widely been used
in the operator splitting algorithms
\cite{combettes05,bauschke_book19,condat_siam23}.
The hard shrinkage operator, on the other hand, is ``discontinuous'', 
while it causes no extra bias
in the estimates of large-magnitude coefficients.
Hard shrinkage is derived from the {\em set-valued} proximity operator
of the $\ell_0$ pseudo-norm.

The firm shrinkage operator \cite{gao97} has been proposed in 1997,
possessing a good balance
in the sense of (i) being Lipschitz continuous 
(with constant strictly greater than unity) and
(ii) significantly reducing the estimation biases.
It is the (generalized) proximity operator of 
the minimax concave (MC) penalty \cite{zhang,selesnick}
which is weakly convex.
Owing to this property, the proximal forward-backward splitting algorithm
employing firm shrinkage has a guarantee of convergence to 
a global minimizer of the objective function \cite{bayram16,yukawa_molgrad24}.
Another discontinuous shrinkage operator
has been proposed in the context of image restoration
based on the reversely ordered weighted $\ell_1$ (ROWL) penalty 
\cite{sasaki24},
which gives small (possibly zero) weights to dominant coefficients
to avoid underestimation
so that edge sharpness and gradation smoothness of images are enhanced
simultaneously.
The ROWL shrinkage operator requires no knowledge about the ``magnitude'' of
the dominant coefficients but instead it requires the ``number'' of
those dominant components essentially.
This is in sharp contrast to firm shrinkage which 
requires (at least a rough approximation, or a sort of bound of) 
the ``magnitude'' of the dominant coefficients
but it does not care about the ``number''.
This implies that ROWL shrinkage would be preferable in such situations
where the magnitude of dominant components tends to change but the
number (or at least its rough estimate) can be assumed to be known {\it
a priori}.
Despite this potential benefit,
the ROWL shrinkage operator is discontinuous on the boundaries
where some of the components share the same magnitude,
provided that the corresponding weights are different.
A natural question would be the following: {\it is it possible to
convert the discontinuous operator to a continuous one?}

To address the above question, our arguments in the present study rely on
our recent result \cite{yukawa_molgrad24}:
a given operator is the ``single-valued'' proximity operator of 
a $\eta$-weakly convex function for $\eta\in(0,1)$
if and only if
it is a monotone Lipschitz-continuous gradient (MoL-Grad) denoiser
(i.e., it can be expressed as a Lipschitz-continuous gradient operator
of a differentiable convex function).
See Fact \ref{fact:weaklyconvex_necsuffcondition}.
It has been shown in \cite{yukawa_molgrad24} that 
the operator-regularization approach (or the plug-and-play method
\cite{venkatakrishnan13}) employing a MoL-Grad denoiser actually solves
a variational problem involving a weakly convex regularizer which is
characterized explicitly by the denoiser.

%\clearpage
In this paper, we present a pair of fundamental findings concerning
the ``set-valued'' proximity operator\footnote{The ``set-valued'' proximity
operator has been studied previously in the literature
\cite{gribonval20,bauschke21}.
See also \cite{rockafellar10}.
} of (proper) nonconvex function, aiming to build a way of 
converting a discontinuous operator to its continuous relaxation.
First, given a nonconvex function,
the image of a point 
with respect to the proximity operator
is included by its image with respect to the proximity operator of
its lower-semicontinuous 1-weakly-convex envelope
(Theorem \ref{theorem:prox_inclusion}).
This well explains the known fact that hard shrinkage can also be
derived from the proximity operator of a certain weakly convex function
\cite{kowalski_icip14,bayram15}, as elaborated in Section 
\ref{sec:application}-\ref{subsec:l0_mc}.
Second, the ``set-valued'' proximity operator of a (proper) 
lower-semicontinuous 1-weakly-convex function can be converted,
via double inversion, to a MoL-Grad denoiser
(Theorem \ref{theorem:prox_conversion}).
Those proximal {\em inclusion} and {\em conversion} lead to a principled
way of deriving a continuous relaxation (a Lipschitz-continuous
relaxation, more specifically) of a given discontinuous
operator.
As an illustrative example, we show that the firm shrinkage operator
is obtained as a continuous relaxation of the hard shrinkage operator.
Under the same principle, we derive a continuous relaxation
of the ROWL shrinkage operator.
Numerical examples show that the continuous relaxation has 
potential advantages over the original discontinuous shrinkage.

% \newpage
% \vspace*{5em}

\section{Preliminaries}
\label{sec:preliminaries}

Let $(\euclidspace,\innerprod{\cdot}{\cdot})$ be a real Hilbert space
with the induced norm $\norm{\cdot}$.
Let $\real$, $\real_+$, $\real_{++}$, and $\Natural$ denote
the sets of real numbers, nonnegative real numbers,
strictly positive real numbers, and nonnegative integers, respectively.

% We denote by $0$ and $O$ the null vector of $\euclidspace$ and
% the zero operator from $\euclidspace$ to $\euclidspace$, respectively.
% We may use the same notation of inner product, norm, null vector, and
% zero operator for other Hilbert spaces, whenever it causes no confusion.

\subsection{Lipschitz Continuity of Operator, Set-valued Operator}
\label{subsec:nonexpansive}

Let
$\id:\euclidspace\rightarrow\euclidspace:x\mapsto x$ denote 
the identity operator on $\euclidspace$.
An operator $T:\euclidspace\rightarrow \euclidspace$ is
Lipschitz continuous
with constant $\kappa>0$ (or $\kappa$-Lipschitz continuous for short)
if
\begin{equation}
\norm{T(x)-T(y)}\leq \kappa \norm{x-y}, ~\forall (x,y)\in\euclidspace^2.
\label{eq:def_lipschitz}
\end{equation}

Let $2^{\euclidspace}$ denote the power set 
(the family of all subsets) of $\euclidspace$.
An operator $\mathsf{T}:\euclidspace\rightarrow 2^{\euclidspace}$ is called 
a set-valued operator, where $\mathsf{T}(x)\subset \euclidspace$ for every $x\in\euclidspace$.
Given a set-valued operator $\mathsf{T}$,
a mapping $U:\euclidspace\rightarrow \euclidspace$ 
such that $U(x)\in \mathsf{T}(x)$ for every $ x\in\euclidspace$
is called 
{\em a selection} of $\mathsf{T}$.
The inverse of a set-valued operator $\mathsf{T}$ is defined by
$\mathsf{T}^{-1}:\euclidspace\rightarrow 2^{\euclidspace}:
y \mapsto \{x\in\euclidspace\mid y\in \mathsf{T}(x)\}$,
which is again a set-valued operator in general.

An operator $\mathsf{T}:\euclidspace\rightarrow 2^{\euclidspace}$ is {\em monotone} if
\begin{equation}
\hspace*{-2em} \innerprod{x \! - \! y}{u \! - \! v}\geq 0,~\forall (x,u)\in{\rm gra~}\mathsf{T},~\forall
  (y,v)\in{\rm gra~}\mathsf{T},
\label{eq:def_monotone}
\end{equation}
where gra $\mathsf{T}:=\{(x,u)\in\euclidspace^2\mid u\in
\mathsf{T}(x)\}$ is the graph of $\mathsf{T}$.
The following fact is known \cite[Proposition 20.10]{combettes}.
\begin{fact}[Preservation of monotonicity]
\label{fact:monotone} 
Let $\mathcal{K}$ be a real Hilbert space,
$A:\euclidspace\rightarrow 2^\euclidspace$ and
$B:\mathcal{K}\rightarrow 2^\mathcal{K}$
be monotone operators,
$L:\euclidspace\rightarrow \mathcal{K}$
be a bounded linear operator,
and $c\in\real_+$ be a nonnegative constant.
Then, the operators
$A^{-1}$, $c A$, and $A+ L^* BL$ are monotone,
where $L^*$ denotes the adjoint operator of $L$.
\end{fact}

A monotone operator $\mathsf{T}$ is
{\em maximally monotone} if no other monotone operator
has its graph containing gra $\mathsf{T}$ properly.
% For instance, the subdifferential operator $\partial f$ of a proper function
% $f:\euclidspace\rightarrow (-\infty,+\infty]$
% is monotone, and 
% it is maximally monotone if $f\in\Gamma_0(\euclidspace)$.
% Suppose that a proper function
% $f:\euclidspace\rightarrow (-\infty,+\infty]$ is 
% G\^ateaux differentiable on $\dom f$ which is open and convex.

% Then, $f$ is convex if and only if $\nabla f$ is monotone
% \cite[Proposition 17.7]{combettes}.

\subsection{Lower Semicontinuity and Convexity of Function}
\label{subsec:properness}

A function $f:\euclidspace\rightarrow (-\infty,+\infty]:=
\real\cup \{+\infty\}$ is {\em proper} if the domain is nonempty; i.e.,
$\dom f:= \{x\in\euclidspace\mid f(x)<+\infty\}\neq \varnothing$.
A function $f$ is {\it lower semicontinuous (l.s.c.)} on $\euclidspace$ 
if the level set
${\rm lev}_{\leq a} f:=
\left\{x\in\euclidspace: f(x)\leq a\right\}$
is closed for every $a\in\real$.
A function $f:\euclidspace\rightarrow
(-\infty,+\infty]$ is convex on $\euclidspace$ if
$f(\alpha x + (1-\alpha)y)\leq 
\alpha f(x) + (1-\alpha)f(y)$ for every
$(x,y,\alpha)\in\dom f\times\dom f\times [0,1]$.
% If the inequality of convex function holds with strict inequality
% whenever $x\neq y$, $f$ is strictly convex.
% For a positive constant $\eta\in\real_{++}$,
% $f$ is $\eta$-strongly convex if $f- (\eta/2) \norm{\cdot}^2$ is convex,
% while $f$ is $\eta$-weakly convex
% if $f+ (\eta/2) \norm{\cdot}^2$ is convex.
 For $\eta\in (-\infty,+\infty]$,
we say that $f$ is $\eta$-weakly convex
if $f+ (\eta/2) \norm{\cdot}^2$ is convex.
Here, $\eta<0$ (i.e., $f - (\abs{\eta}/2) \norm{\cdot}^2$ is convex)
means that $f$ is  $\abs{\eta}$-strongly convex.
Clearly, $\eta$-weak convexity of $f$ implies 
$\tilde{\eta}$-weak convexity of $f$ for an arbitrary $\tilde{\eta} \geq
\eta$.
When the ``minimal'' weak-convexity parameter is $\eta=+\infty$,
$f + \alpha\norm{\cdot}^2$ is nonconvex for every $\alpha\in \real$,
i.e., $f$ is not weakly convex (for any parameter in $\real$).
%
%Every continuous function is l.s.c..
The set of all proper l.s.c.~convex functions
$f:\euclidspace\rightarrow (-\infty,+\infty]$
is denoted by $\Gamma_0(\euclidspace)$.

Given a proper function $f:\euclidspace\rightarrow (-\infty,+\infty]$,
{\em the Fenchel conjugate (a.k.a.~the Legendre transform) of $f$} is 
$f^*: \euclidspace \rightarrow (-\infty,\infty]:u\mapsto
\sup_{x\in\euclidspace} (\innerprod{x}{u} - f(x))$.
The conjugate $f^{**}:= (f^*)^*$ of $f^*$ is called 
the biconjugate of $f$.
In general, $f^*$ is l.s.c.~and convex
\cite[Proposition 13.13]{combettes}.\footnote{
It may happen that $\dom f^* = \varnothing$. For instance, the conjugate
of $f:\real\rightarrow\real:x\mapsto -x^2$ is 
$f^*(u)=+\infty$ for every $u\in\real$.
}
If $\dom f^* \neq \varnothing$ (i.e., $f^*$ is proper), or equivalently
if $f$ possesses a continuous {\em affine minorant}\footnote{
Function $f$ has a continuous affine minorant if
there exist some $(a,b)\in \euclidspace\times \real$  such that
$f(x) \geq \innerprod{a}{x}+b$ for every $x\in\euclidspace$.}, 
then $f^{**} = \breve{f}$;
otherwise $f^{**}(x)= -\infty$ for every
$x\in\euclidspace$
\cite[Proposition 13.45]{combettes}. 
Here, $\breve{f}$ is {\em the l.s.c.~convex envelope} of
 $f$.\footnote{
The l.s.c.~convex envelope $\breve{f}$ is the
largest l.s.c.~convex function $g$ such that 
$f(x) \geq g(x)$, $\forall x\in\euclidspace$.}

\begin{fact}
[Fenchel--Moreau Theorem \cite{combettes}]
\label{fact:fenchel_moreau}
Given a proper function $f:\euclidspace\rightarrow (-\infty,+\infty]$,
the following equivalence and implication hold:
$f\in\Gamma_0(\euclidspace) \Leftrightarrow f=f^{**}
\Rightarrow f^*\in\Gamma_0(\euclidspace)$.
\end{fact}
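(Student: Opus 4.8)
\noindent\emph{Sketch of a proof.}
The plan is to assemble the statement from three ingredients: the universal Fenchel--Young inequality, a Hahn--Banach separation argument applied to the epigraph, and a short properness check. Two observations valid for \emph{every} proper $f$ should be recorded first. From the definition of the conjugate, $f^*(u)\geq\innerprod{x}{u}-f(x)$ for all $x,u\in\euclidspace$ (Fenchel--Young); rearranging this and taking the supremum over $u$ gives the pointwise inequality $f\geq f^{**}$. Moreover, as already recalled in the excerpt via \cite[Prop.~13.13]{combettes}, $f^*$ is l.s.c.\ and convex, and therefore so is $f^{**}=(f^*)^*$.

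The easy direction $f=f^{**}\Rightarrow f\in\Gamma_0(\euclidspace)$ is then immediate: $f$ inherits lower semicontinuity and convexity from $f^{**}$, and $f$ is proper by hypothesis.

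For the converse, assume $f\in\Gamma_0(\euclidspace)$; since $f\geq f^{**}$ always, it suffices to prove $f(x_0)\leq f^{**}(x_0)$ for each $x_0\in\euclidspace$. Here $\mathrm{epi}\,f\subset\euclidspace\times\real$ is nonempty, closed, and convex, so whenever $(x_0,\lambda)\notin\mathrm{epi}\,f$ the Hahn--Banach separation theorem supplies $(a,\beta)\in\euclidspace\times\real$ and $c\in\real$ with
\[
\innerprod{a}{x}+\beta t\;>\;c\;>\;\innerprod{a}{x_0}+\beta\lambda,\qquad\forall\,(x,t)\in\mathrm{epi}\,f .
\]
Letting $t\to+\infty$ forces $\beta\geq0$. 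If $\beta>0$, then setting $t=f(x)$ and dividing by $\beta$ exhibits a continuous affine minorant $h$ of $f$ with $h(x_0)>\lambda$, whence $f^{**}(x_0)\geq h(x_0)>\lambda$; letting $\lambda\uparrow f(x_0)$ (or $\lambda\to+\infty$ when $f(x_0)=+\infty$) yields $f^{**}(x_0)\geq f(x_0)$. \textbf{The ``vertical'' case $\beta=0$ --- which can only occur when $f(x_0)=+\infty$, since otherwise $x_0\in\dom f$ contradicts the separation --- is the step I expect to be the main obstacle.} I would dispatch it in the standard way: first apply the $\beta>0$ analysis at a point of $\dom f$ (where a non-vertical separator necessarily exists, by properness) to obtain one continuous affine minorant $h_1$ of $f$; then observe that $h_1+\mu\bigl(c-\innerprod{a}{\cdot}\bigr)$ is still a minorant of $f$ for every $\mu\geq0$ (it is $\leq h_1$ on $\dom f$ because $\innerprod{a}{x}>c$ there, and $f=+\infty$ off $\dom f$), while its value at $x_0$ tends to $+\infty$ as $\mu\to+\infty$; hence $f^{**}(x_0)=+\infty\geq f(x_0)$.

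Finally, for $f=f^{**}\Rightarrow f^*\in\Gamma_0(\euclidspace)$ we already know $f^*$ is l.s.c.\ and convex, so only properness is at stake. Fixing $x_0\in\dom f$ (nonempty since $f$ is proper), Fenchel--Young gives $f^*(u)\geq\innerprod{x_0}{u}-f(x_0)>-\infty$ for all $u$, so $f^*$ never equals $-\infty$; and $f^*\not\equiv+\infty$, for otherwise $f^{**}\equiv-\infty$, contradicting that $f=f^{**}$ is proper. Hence $\dom f^*\neq\varnothing$ and $f^*$ is proper, i.e.\ $f^*\in\Gamma_0(\euclidspace)$, which closes the chain $f\in\Gamma_0(\euclidspace)\Leftrightarrow f=f^{**}\Rightarrow f^*\in\Gamma_0(\euclidspace)$.
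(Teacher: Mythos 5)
The paper does not prove this statement: it is imported verbatim as a Fact from the cited reference (Bauschke--Combettes), so there is no in-paper argument to compare against. Your sketch is a correct and complete rendition of the standard proof of the Fenchel--Moreau theorem --- Fenchel--Young giving $f\geq f^{**}$, strict separation of a point from the closed convex epigraph, the case split on whether the separating functional is vertical (with the usual trick of adding $\mu\bigl(c-\innerprod{a}{\cdot}\bigr)$ to a fixed affine minorant to handle $\beta=0$), and the properness check for $f^{*}$ --- which is essentially the argument in the cited textbook. No gaps.
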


Let $f:\euclidspace\rightarrow (-\infty,+\infty]$ be proper.
Then, the set-valued operator $\partial f:\euclidspace \rightarrow
2^\euclidspace$
such that
\begin{equation}
\partial f: x \mapsto
\{z\in\euclidspace\mid \innerprod{y\! -\! x}{z} + f(x) \leq f(y), ~\forall y\in\euclidspace\}
\label{eq:def_subdifferential}
\end{equation}
is the subdifferential of $f$ \cite{combettes}.

\begin{fact}
\label{fact:subdifferential}
Let $f:\euclidspace\rightarrow (-\infty,+\infty]$ be proper.
Then, the following statements hold.

\begin{enumerate}
 \item $\partial f$ is monotone {\rm \cite[Example 20.3]{combettes}}.
 \item $\partial f(x) \subset (\partial f^*)^{-1}(x)$ for every $x\in\euclidspace$
and 
$(\partial f)^{-1}(u) \subset \partial f^*(u)$ for every $u\in\euclidspace$
{\rm \cite[Proposition 16.10]{combettes}}.
\end{enumerate}

\end{fact}

\begin{fact}
\label{fact:subdifferential_convex}
Let $f\in\Gamma_0(\euclidspace)$. 
Then, the following statements hold.

\begin{enumerate}
 \item $\partial f$ is maximally monotone {\rm \cite[Theorem 20.25]{combettes}}.
 \item $(\partial f)^{-1} = \partial f^*$ {\rm \cite[Corollary
       16.30]{combettes}}.
\item $\partial f(x)\neq \varnothing$ if $f$ is continuous at
 $x\in\euclidspace$
{\rm \cite[Proposition 16.17]{combettes}}.
\item
$\partial f(x)= \{\nabla f(x)\}$
 if $f$ is (G\^ateaux) differentiable
 with its (G\^ateaux) derivative $\nabla f$
{\rm \cite[Proposition 17.31]{combettes}}.
\end{enumerate}
 
\end{fact}

%
% Suppose that $f$ is convex.
% Then, (i) ,
%  and
% (ii) $ 

% The composition of a convex function with an affine operator is
% convex \cite[Proposition 8.20]{combettes}.

\subsection{Proximity Operator of Nonconvex Function}
\label{subsec:prox}

\begin{definition}[Proximity operator \cite{rockafellar10,bauschke21}]
Let $f:\euclidspace\rightarrow (-\infty,+\infty]$ be proper.
The proximity operator of $f$
of index $\gamma \in\real_{++}$ is then defined by
\begin{equation}
\hspace*{-.4em}
{\prox}_{\gamma f}:
\euclidspace \! \rightarrow \! 2^\euclidspace:
x \mapsto \argmin_{y \in \mathcal{H}}
\!\Big(f (y) + \frac{1}{2 \gamma}
\norm{x \!- \!y}^2\Big),
\label{eq:def_prox}
\end{equation}
which is {\em set-valued} in general. 
\end{definition}
We present a slight extension of the previous result \cite[Lemma
1]{yukawa_molgrad24} below.

% \begin{fact}[\cite{yukawa_molgrad24}]
% \label{fact:prox_resolvent_nonconvex_case}
% For a proper function $f:\euclidspace\rightarrow (-\infty,+\infty]$,
% it holds that
% \begin{align}
% \prox_f  =&~ \Big[\partial \Big(f +
%  \frac{1}{2}\norm{\cdot}^2\Big) \Big]^{-1}.
% \label{eq:prox_resolvent_nonconvex}
% \end{align}

% \end{fact}

\begin{lemma}
\label{lemma:prox_decomp}
Let $f:\euclidspace\rightarrow (-\infty,+\infty]$ be a proper function.
Then, given every positive constant $\gamma\in\real_{++}$,
it holds that
\begin{equation}
\prox_{\gamma f} = \left[
\partial \Big(f + \frac{1}{2\gamma}\norm{\cdot}^2\Big)
\right]^{-1}\circ (\gamma^{-1}\id),
\end{equation}
which is monotone.
\end{lemma}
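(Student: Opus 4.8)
The plan is to fix $x\in\euclidspace$ and characterize the set $\prox_{\gamma f}(x)=\argmin_{y\in\euclidspace}\big(f(y)+\tfrac{1}{2\gamma}\norm{x-y}^2\big)$ of global minimizers directly through the subdifferential \eqref{eq:def_subdifferential}, exploiting that this ``Fenchel-type'' subdifferential, by construction, encodes \emph{global} minimality: for any proper $g$, one has $0\in\partial g(z)$ if and only if $g(z)\le g(y)$ for all $y\in\euclidspace$. No convexity of $f$ is needed anywhere, which is exactly why the lemma is useful in the set-valued setting of this paper.

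First I would expand the square, $\norm{x-y}^2=\norm{y}^2-2\innerprod{x}{y}+\norm{x}^2$, and set $h:=f+\tfrac{1}{2\gamma}\norm{\cdot}^2$, which is proper since $\dom h=\dom f\neq\varnothing$. Then
\[
f(y)+\frac{1}{2\gamma}\norm{x-y}^2 = h(y)-\innerprod{\gamma^{-1}x}{y}+\frac{1}{2\gamma}\norm{x}^2 ,
\]
and since the last term does not depend on $y$, we get that $y^\star\in\prox_{\gamma f}(x)$ if and only if $y^\star$ minimizes $y\mapsto h(y)-\innerprod{\gamma^{-1}x}{y}$ over $\euclidspace$. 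The second step is to read minimality off \eqref{eq:def_subdifferential}: a direct computation from the definition gives $\partial\big(h-\innerprod{\gamma^{-1}x}{\cdot}\big)(y^\star)=\partial h(y^\star)-\gamma^{-1}x$, so $0\in\partial\big(h-\innerprod{\gamma^{-1}x}{\cdot}\big)(y^\star)$ is equivalent to $\gamma^{-1}x\in\partial h(y^\star)$, i.e.\ to $y^\star\in(\partial h)^{-1}(\gamma^{-1}x)$. Combining, $\prox_{\gamma f}(x)=(\partial h)^{-1}(\gamma^{-1}x)=\big[(\partial h)^{-1}\circ(\gamma^{-1}\id)\big](x)$ for every $x\in\euclidspace$, which is the asserted identity.

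I do not expect a genuine obstacle here; the only points deserving a sentence of care are that \eqref{eq:def_subdifferential} is the global (Fenchel-type) subdifferential, so the equivalence ``$\gamma^{-1}x\in\partial h(y^\star)\iff y^\star$ globally minimizes the shifted $h$'' is exact rather than merely necessary, and that the claimed equality is an identity of (possibly empty) subsets of $\euclidspace$, so no existence of a minimizer is assumed. As a by-product, monotonicity of $\prox_{\gamma f}$ is immediate, since $(\partial h)^{-1}$ is monotone and precomposition with the positive scaling $\gamma^{-1}\id$ preserves monotonicity.
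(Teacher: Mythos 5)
Your proof is correct, and it is complete as a self-contained argument: completing the square to rewrite the objective as $h(y)-\innerprod{\gamma^{-1}x}{y}+\mathrm{const}$ with $h:=f+\tfrac{1}{2\gamma}\norm{\cdot}^2$, invoking the exact Fermat rule $0\in\partial g(z)\Leftrightarrow z\in\argmin g$ for the global (Fenchel-type) subdifferential \eqref{eq:def_subdifferential}, and using the translation identity $\partial\big(h-\innerprod{u}{\cdot}\big)=\partial h-u$, all of which hold for arbitrary proper $f$ and are verified directly from the definition. The paper takes a shorter but less self-contained route: it cites the case $\gamma=1$ from \cite[Lemma 1]{yukawa_molgrad24} (whose content is precisely your computation specialized to $\gamma=1$) and then deduces the general case by the rescaling chain $p\in[\partial(\gamma f+\tfrac12\norm{\cdot}^2)]^{-1}(x)\Leftrightarrow x\in\gamma\,\partial(f+\tfrac{\gamma^{-1}}{2}\norm{\cdot}^2)(p)\Leftrightarrow p\in[\partial(f+\tfrac{\gamma^{-1}}{2}\norm{\cdot}^2)]^{-1}(\gamma^{-1}x)$, which additionally uses the positive-homogeneity identity $\partial(\gamma g)=\gamma\,\partial g$. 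Your version buys independence from the external reference and handles all $\gamma$ in one pass; the paper's buys brevity at the price of the extra scaling identity and the citation. Your closing remarks (the equality is one of possibly empty sets, and monotonicity of $\prox_{\gamma f}$ follows since $(\partial h)^{-1}$ is monotone) are both accurate and consistent with how the paper uses the lemma.
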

\begin{proof}
The case of $\gamma:=1$ is given in
\cite[Lemma 1]{yukawa_molgrad24},
from which the following equivalence can be verified:
\begin{align}
\hspace*{0em} &~ p\in\prox_{\gamma f} (x)
= 
\left[
 \partial 
  \left(
    \gamma f + (1/2)\norm{\cdot}^2
 \right)
\right]^{-1}(x)
\nonumber \\
\hspace*{-0em} \Leftrightarrow  &~ 
x\!\in 
 \partial \!
  \left(\!
    \gamma f + (1/2)\norm{\cdot}^2
 \right)\!
(p)
= 
    \gamma \partial \!
  \left(
 f + (\gamma^{-1}/2) \norm{\cdot}^2
 \right)\!
(p)
\nonumber \\
% %
% \hspace*{-3em} \Leftrightarrow  &~ 
%     \gamma^{-1}x\in 
%  \partial 
%   \left(
%  g +  (\gamma^{-1}/2) \norm{\cdot}^2
%  \right)
% (p)
% \nonumber \\
%
\hspace*{-1em} \Leftrightarrow  &~ 
p   \in 
\left[ \partial 
  \left(
 f +  (\gamma^{-1}/2) \norm{\cdot}^2
 \right)
\right]^{-1}
(\gamma^{-1}x).
\end{align}
Finally, monotonicity of $\prox_{\gamma f}$ can readily be verified by
combining Facts \ref{fact:monotone} and \ref{fact:subdifferential}.1.
\end{proof}

\begin{definition}[Single-valued proximity operator \cite{yukawa_molgrad24}]
If ${\prox}_{\gamma f}$ is single-valued, 
it is denoted by ${\sprox}_{\gamma f}:\euclidspace\rightarrow \euclidspace$,
which is referred to as the s-prox operator
of $f$ of index $\gamma$.
\end{definition}

As a particular instance,
if $f+(\eta/2)\norm{\cdot}^2\in\gzh$ for some constant $\eta\in (-\infty, \gamma^{-1})$,
existence and uniqueness of minimizer is automatically
ensured.
Here, when $\eta\leq 0$, $f$ is convex (strongly convex, more
specifically, when $\eta<0$), and thus
${\sprox}_{\gamma f}$ reduces to the classical Moreau's proximity operator
\cite{moreau65}.

\begin{fact}[\cite{yukawa_molgrad24} MoL-Grad Denoiser]
\label{fact:weaklyconvex_necsuffcondition}

 Let $T:\euclidspace\rightarrow \euclidspace$.
Then, for every $\eta\in[0,1)$, the following two conditions are
 equivalent.\footnote{
The case of $\eta:=0$ is due to \cite{moreau65}.
}
\begin{enumerate}
 \item[(C1)] $T=\sprox_\varphi$
       for some $\varphi:\euclidspace\rightarrow (-\infty,+\infty]$
such that $\varphi + \left(\eta/2\right)\norm{\cdot}^2\in\Gamma_0(\euclidspace)$.
 \item[(C2)] $T$ is a $(1-\eta)^{-1}$-Lipschitz continuous gradient operator of
a  (Fr\'echet) differentiable convex function $\psi\in \gzh$.
In other words,
$T$ is a MoL-Grad (monotone Lipschitz-continuous gradient) denoiser.
\end{enumerate}
If {\rm (C1)}, or equivalently {\rm (C2)}, is satisfied, 
then it holds that $\varphi = \psi^* - (1/2)\norm{\cdot}^2$
$(\Leftrightarrow \psi = (\varphi + (1/2)\norm{\cdot}^2)^*)$.

\end{fact}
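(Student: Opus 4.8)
The plan is to reduce both conditions to the classical duality between strong convexity and Lipschitz-smoothness of the Fenchel conjugate, using Lemma~\ref{lemma:prox_decomp} as the bridge. Putting $\gamma:=1$ in Lemma~\ref{lemma:prox_decomp} gives $\prox_\varphi = [\partial(\varphi + \frac{1}{2}\norm{\cdot}^2)]^{-1}$ for every proper $\varphi$. I would abbreviate $g:=\varphi + \frac{1}{2}\norm{\cdot}^2$ and first observe that the hypothesis of (C1), namely $\varphi + \frac{1-\beta}{2}\norm{\cdot}^2\in\Gamma_0(\euclidspace)$, is equivalent to saying that $g\in\Gamma_0(\euclidspace)$ and $g - \frac{\beta}{2}\norm{\cdot}^2$ is convex, i.e.\ that $g$ is a proper l.s.c.\ $\beta$-strongly convex function (for the forward reading, write $g$ as the sum of the $\Gamma_0$ function $\varphi+\frac{1-\beta}{2}\norm{\cdot}^2$ and the continuous convex function $\frac{\beta}{2}\norm{\cdot}^2$). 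Hence $T=\prox_\varphi=(\partial g)^{-1}$, and since $g\in\Gamma_0(\euclidspace)$, the inversion rule $(\partial g)^{-1}=\partial g^*$ (\cite[Corollary 16.30]{combettes}) yields $T=\partial g^*$.

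For (C1)$\Rightarrow$(C2) I would then invoke the standard equivalence ``$g\in\Gamma_0(\euclidspace)$ is $\beta$-strongly convex $\iff$ $g^*$ is Fr\'echet differentiable on all of $\euclidspace$ with $\beta^{-1}$-Lipschitz gradient'' (the Hilbert-space form, via Baillon--Haddad and the conjugacy correspondence). Setting $\psi:=g^*$, this makes $\psi$ a differentiable convex function with $\psi\in\Gamma_0(\euclidspace)$ by Fact~\ref{fact:fenchel_moreau}, and $T=\partial g^*=\nabla\psi$ is $\beta^{-1}$-Lipschitz continuous; single-valuedness and everywhere-definedness of $T$ are automatic because $\psi$ is differentiable on $\euclidspace$. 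This is exactly (C2), and it simultaneously pins down $\varphi = g - \frac{1}{2}\norm{\cdot}^2 = \psi^* - \frac{1}{2}\norm{\cdot}^2$, equivalently $(\varphi+\frac{1}{2}\norm{\cdot}^2)^* = g^* = \psi$.

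For the converse (C2)$\Rightarrow$(C1), I would start from $\psi\in\Gamma_0(\euclidspace)$ differentiable with $T=\nabla\psi$ being $\beta^{-1}$-Lipschitz, apply the same duality in the reverse direction to get that $\psi^*$ is $\beta$-strongly convex with $\psi^*\in\Gamma_0(\euclidspace)$, and then define $\varphi:=\psi^* - \frac{1}{2}\norm{\cdot}^2$; the identity $\varphi + \frac{1-\beta}{2}\norm{\cdot}^2 = \psi^* - \frac{\beta}{2}\norm{\cdot}^2\in\Gamma_0(\euclidspace)$ is immediate. Finally, Lemma~\ref{lemma:prox_decomp} again gives $\prox_\varphi = [\partial(\varphi+\frac{1}{2}\norm{\cdot}^2)]^{-1} = (\partial\psi^*)^{-1} = \partial\psi^{**} = \partial\psi = \nabla\psi = T$, where $(\partial\psi^*)^{-1}=\partial\psi^{**}$ uses $\psi^*\in\Gamma_0(\euclidspace)$ (\cite[Corollary 16.30]{combettes}) and $\psi^{**}=\psi$ uses $\psi\in\Gamma_0(\euclidspace)$ (Fact~\ref{fact:fenchel_moreau}); in particular $\prox_\varphi$ is single-valued, justifying the notation $\sprox_\varphi$. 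This yields (C1) together with the asserted relation between $\varphi$ and $\psi$.

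The only genuinely non-bookkeeping ingredient --- and hence the main obstacle --- is the equivalence between $\beta$-strong convexity of $g\in\Gamma_0(\euclidspace)$ and $\beta^{-1}$-Lipschitz differentiability of $g^*$ in the general Hilbert-space setting; in finite dimensions it is classical, and in the Hilbert setting it follows by combining the Baillon--Haddad theorem (cocoercivity of $\nabla\psi$) with the conjugacy correspondence $(\partial\psi)^{-1}=\partial\psi^*$. Everything else is a mechanical application of Lemma~\ref{lemma:prox_decomp}, the inversion rule for subdifferentials of $\Gamma_0(\euclidspace)$ functions, and Fenchel--Moreau. A final point to check is that the restriction $\beta\in(0,1)$ enters only to keep $\varphi + \frac{1-\beta}{2}\norm{\cdot}^2$ a bona fide element of $\Gamma_0(\euclidspace)$ and to force the Lipschitz constant $\beta^{-1}>1$, i.e.\ to stay in the properly weakly-convex (non-convex) regime rather than the plainly convex one.
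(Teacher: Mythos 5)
Your argument is correct. Note that the paper itself offers no proof of this statement: it is quoted as a Fact from \cite{yukawa_molgrad24}. Your route --- setting $g:=\varphi+\tfrac12\norm{\cdot}^2$, observing that the hypothesis of (C1) is exactly $\beta$-strong convexity of $g\in\Gamma_0(\euclidspace)$, passing through Lemma~\ref{lemma:prox_decomp} and $(\partial g)^{-1}=\partial g^*$, and then invoking the conjugate duality between strong convexity and Lipschitz-smoothness (\cite[Theorem~18.15]{combettes}, valid in Hilbert spaces) --- is the standard proof and is the same machinery the paper deploys for the borderline case $\beta=0$ in Proposition~\ref{proposition:max_cyc_mono_weak_convexity}; the only caveat, which does not affect correctness, is that $\psi$ in (C2) is determined by $T$ only up to an additive constant, so the closing identity $\varphi=\psi^*-\tfrac12\norm{\cdot}^2$ holds for the normalization $\psi:=(\varphi+\tfrac12\norm{\cdot}^2)^*$ that your construction produces.
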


%\clearpage

\section{Proximal Inclusion and Conversion}
\label{sec:general_case}

Suppose that a given discontinuous (monotone) operator $T$
is a selection of the (set-valued) proximity operator
of some proper function.
% $\prox_{f}$ for some proper function
% $f:\euclidspace\rightarrow (-\infty,+\infty]$.
%
Then, there exists a principled way of constructing
a continuous relaxation of $T$
which is the s-prox operator of a certain weakly convex function.
This is the main claim of this article 
that will be supported by the key results ---
{\em proximal inclusion and conversion}.
Some other results on set-valued proximity operators are also presented.
All results (lemma, propositions, theorems, corollaries) in what
follows are the original contributions of this work.

\subsection{Interplay of Maximality of Monotone Operator and
Weak Convexity of Function}
\label{subsec:interplay_max_cyclic_monotone_weak_convexity}

Fact \ref{fact:weaklyconvex_necsuffcondition} presented above
gives an interplay between $\eta$-weakly convex functions for
$\eta\in[0,1)$ and the $(1-\eta)^{-1}$-Lipschitz continuous
gradients of smooth convex functions, stemming essentially from 
the duality between strongly convex functions and smooth convex functions.
Now, the question is the following: is there any such relation in the case of $\eta \in [1,+\infty)$?

While the proximity operator $\sprox_{\varphi}$ is (Lipschitz) continuous in the case of $\eta<1$,
the case of $\eta\geq 1$ (more specifically, the case in which
$\varphi+ (\eta/2)\norm{\cdot}^2\not\in\Gamma_0(\euclidspace)$ for any $\eta<1$)
includes those functions of which the ``set-valued'' proximity operator
contains a discontinuous shrinkage operator as its selection.
The following proposition concerns the case of $\eta=1$,
which will be linked to the case of $\eta>1$ later in Section 
\ref{sec:general_case}-\ref{subsec:prox_inclusion}.

%%%
\begin{proposition}
\label{proposition:max_cyc_mono_weak_convexity}

Given a set-valued operator $\mathsf{T}:\euclidspace\rightarrow 2^\euclidspace$,
the following statements are equivalent.
\begin{enumerate}
 \item $\mathsf{T}=\partial \psi$ for some convex function $\psi\in\Gamma_0(\euclidspace)$.
 \item $\mathsf{T}=\prox_{\phi}$ 
       for some $\phi+(1/2)\norm{\cdot}^2\in\Gamma_0(\euclidspace)$.
\end{enumerate}
Moreover, if statements {\rm 1 -- 2} are true, it holds that
$\phi =  \psi^* - (1/2)\norm{\cdot}^2$
$(\Leftrightarrow \psi = (\phi + (1/2)\norm{\cdot}^2)^*)$.

\end{proposition}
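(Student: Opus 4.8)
The plan is to establish the equivalence by a double implication, using Lemma~\ref{lemma:prox_decomp} (with $\gamma := 1$) to rewrite $\prox_\phi$ in terms of a subdifferential inverse, and then the Fenchel--Moreau machinery recalled in Section~\ref{subsec:properness} to pass between a function in $\Gamma_0(\euclidspace)$ and the inverse of its subdifferential. The central identity to exploit is that for $g := \phi + (1/2)\norm{\cdot}^2$ one has $\prox_\phi = (\partial g)^{-1}$, and that when $g\in\Gamma_0(\euclidspace)$ the relation $(\partial g)^{-1} = \partial g^*$ holds (\cite[Corollary 16.30]{combettes}). So the bridge in both directions is: $\mathsf{T} = \partial\psi \iff \mathsf{T} = (\partial g)^{-1}$ with $\psi = g^*$ and $g = \psi^*$, valid because $g,\psi\in\Gamma_0(\euclidspace)$ are conjugate to one another by Fenchel--Moreau.

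First I would prove $2\Rightarrow 1$. Assume $\mathsf{T} = \prox_\phi$ with $g := \phi + (1/2)\norm{\cdot}^2 \in \Gamma_0(\euclidspace)$. By Lemma~\ref{lemma:prox_decomp} at $\gamma = 1$, $\mathsf{T} = (\partial g)^{-1}$. Since $g\in\Gamma_0(\euclidspace)$, Corollary~16.30 of \cite{combettes} gives $(\partial g)^{-1} = \partial g^*$, and by Fenchel--Moreau $g^*\in\Gamma_0(\euclidspace)$. Thus $\mathsf{T} = \partial\psi$ with $\psi := g^* = (\phi + (1/2)\norm{\cdot}^2)^* \in\Gamma_0(\euclidspace)$, which is exactly statement~1 together with the claimed formula for $\psi$ (hence for $\phi$, since $\psi = g^*$ and $g\in\Gamma_0$ imply $g = g^{**} = \psi^*$, i.e.\ $\phi = \psi^* - (1/2)\norm{\cdot}^2$).

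Next I would prove $1\Rightarrow 2$. Assume $\mathsf{T} = \partial\psi$ with $\psi\in\Gamma_0(\euclidspace)$. Set $\phi := \psi^* - (1/2)\norm{\cdot}^2$ and $g := \phi + (1/2)\norm{\cdot}^2 = \psi^*$; by Fenchel--Moreau $g = \psi^*\in\Gamma_0(\euclidspace)$, so the weak-convexity condition $\phi + (1/2)\norm{\cdot}^2\in\Gamma_0(\euclidspace)$ holds. It remains to identify $\prox_\phi$ with $\partial\psi$: by Lemma~\ref{lemma:prox_decomp}, $\prox_\phi = (\partial g)^{-1}$; since $g\in\Gamma_0(\euclidspace)$, $(\partial g)^{-1} = \partial g^* = \partial\psi^{**} = \partial\psi = \mathsf{T}$, again using Fenchel--Moreau ($\psi = \psi^{**}$). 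This gives statement~2, and the formula $\phi = \psi^* - (1/2)\norm{\cdot}^2$ is built into the construction.

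The main obstacle is not any single hard step but rather being careful about \emph{well-definedness and properness} along the way: one must confirm that the conjugates $\psi^*$, $g^*$ are proper (so that $\Gamma_0(\euclidspace)$ membership is genuine and Fenchel--Moreau applies with equality rather than the degenerate $-\infty$ case), which is automatic here because $\psi,g\in\Gamma_0(\euclidspace)$ already carry a continuous affine minorant. A secondary point worth stating cleanly is that the equality $(\partial g)^{-1} = \partial g^*$ requires $g\in\Gamma_0(\euclidspace)$ — the general inclusion $(\partial g)^{-1}(u)\subset\partial g^*(u)$ recalled in Section~\ref{subsec:properness} is not enough — so the convexity of $g = \phi + (1/2)\norm{\cdot}^2$ is used essentially in both directions, and this is precisely where the $1$-weak-convexity hypothesis on $\phi$ enters. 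The uniqueness-type ``moreover'' claim then follows for free, as it is forced by $g$ and $\psi$ being a conjugate pair in $\Gamma_0(\euclidspace)$.
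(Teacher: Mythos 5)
Your proposal is correct and follows essentially the same route as the paper's proof: both directions hinge on Lemma~\ref{lemma:prox_decomp} with $\gamma=1$ to write $\prox_\phi=[\partial(\phi+(1/2)\norm{\cdot}^2)]^{-1}$, the identity $(\partial g)^{-1}=\partial g^*$ for $g\in\Gamma_0(\euclidspace)$, and Fact~\ref{fact:fenchel_moreau} to pass between the conjugate pair $\psi$ and $g=\psi^*$. Your explicit remarks on properness and on where convexity of $\phi+(1/2)\norm{\cdot}^2$ is essential are accurate but do not change the argument.
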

%%%
\begin{proof}
The equivalence 1) $\Leftrightarrow$ 2)
can be verified by showing the following equivalence basically:
\begin{align}
 &~ \mathsf{T}= \partial \psi, ~ \exists
 \psi\in\Gamma_0(\euclidspace) \nonumber\\
\Leftrightarrow &~ \mathsf{T}= 
(\partial \psi^*)^{-1}=
\Big[
\partial \Big(\underbrace{\Big(\psi^* - \frac{1}{2}\norm{\cdot}^2\Big)}_{=:\phi} +
 \frac{1}{2}\norm{\cdot}^2\Big)
\Big]^{-1}
 \nonumber \\
 &\hspace*{1.1em} =\prox_{\psi^* - (1/2)\norm{\cdot}^2},
~~~ 
\exists
 \psi\in\Gamma_0(\euclidspace).
\nonumber
% \Leftrightarrow &~ \mathsf{T} (x)= \argmin_{y\in \euclidspace} 
% \Big[
% \underbrace{(\psi^* - (1/2)\norm{\cdot}^2)}_{=\phi}(y) + \norm{x-y}^2/2
% \Big]\nonumber \\
%  & \hspace*{2.4em} = \prox_{\psi^* - (1/2)\norm{\cdot}^2}(x),
% ~~~ 
% \exists
%  \psi\in\Gamma_0(\euclidspace),~
% \forall x\in\euclidspace.
% \nonumber
\end{align}
(Proof of $\Rightarrow$)
The last equality can be verified by
Lemma \ref{lemma:prox_decomp},
and $\phi+ (1/2)\norm{\cdot}^2
= \psi^* \in\Gamma_0(\euclidspace)$ follows
by Fact \ref{fact:fenchel_moreau}.

\noindent 
(Proof of $\Leftarrow$)
Letting $\psi:=  (\phi+(1/2)\norm{\cdot}^2)^*$,
we have $\psi\in\gzh$
again by Fact \ref{fact:fenchel_moreau}
so that $\mathsf{T}=(\partial (\phi+(1/2)\norm{\cdot}^2))^{-1} = 
\partial [(\phi+(1/2)\norm{\cdot}^2)^*] = \partial \psi $.
\end{proof}

Proposition \ref{proposition:max_cyc_mono_weak_convexity}
bridges the subdifferential of
convex function and the proximity operator of 1-weakly convex function,
indicating an interplay between 
maximality of monotone operators and 1-weak convexity of functions.

\begin{remark}[Role of Monotonicity]
\label{remark:max_monotone_weak_convexity}
From
Proposition \ref{proposition:max_cyc_mono_weak_convexity} together with
Fact \ref{fact:subdifferential_convex}.1,
$\phi +(1/2)\norm{\cdot}^2\in \Gamma_0(\euclidspace)$
implies that
$\mathsf{T}=\prox_{\phi}$ is maximally monotone.
Viewing the proposition in light of Rockafeller's cyclic monotonicity theorem
{\rm \cite[Theorem 22.18]{combettes}}, moreover,
one can see that
$\mathsf{T}=\prox_{\phi}$ with $\phi +(1/2)\norm{\cdot}^2\in \Gamma_0(\euclidspace)$
if and only if $\mathsf{T}$ is 
maximally cyclically monotone.\footnote{
An operator $\mathsf{T}:\euclidspace\rightarrow 2^\euclidspace$ is cyclically monotone if,
for every integer $n\geq 2$, $u_i\in \mathsf{T}(x_i)$ for $i\in\{1,2,\cdots,n\}$
and $x_{n+1}=x_1$ imply $\sum_{i=1}^{n} \innerprod{x_{i+1}-x_i}{u_i}\leq
 0$.
It is {\em maximally cyclically monotone} if no other cyclically monotone operator
has its graph containing gra $\mathsf{T}$ properly \cite{combettes}.
An operator $\mathsf{T}:\euclidspace\rightarrow 2^\euclidspace$
is maximally cyclically monotone 
if and only if
$\mathsf{T}=\partial f$ for some $f\in \Gamma_0(\euclidspace)$
(Rockafeller's cyclic monotonicity theorem).
It is clear under Fact \ref{fact:subdifferential_convex}.1 that
a maximally cyclically monotone operator is maximally monotone;
the converse is 
(true when $\euclidspace=\real$ but) not true in general.
}
In words, maximal cyclic monotonicity characterizes
the property that $T$ can be expressed as the proximity operator 
(which is set-valued in general) of a 1-weakly convex function.
Note that, whereas
the proximity operator of a 1-weakly convex function
is maximally monotone, the converse is not true in general;
i.e., ``maximal monotonicity'' itself does not ensure
that $T$ can be expressed as the proximity operator of 
a 1-weakly convex function.

In Fact \ref{fact:weaklyconvex_necsuffcondition}, 
monotonicity plays a role of ensuring the convexity of $\psi$.
Indeed, the assumption on $T$ in Fact \ref{fact:weaklyconvex_necsuffcondition}
(i.e., the condition for MoL-Grad denoiser) implies
maximal cyclic monotonicity, 
since $\eta$-weak convexity for $\eta:=1-\beta\in(0,1)$ implies 1-weak convexity.
The assumption required for $\eta\in(0,1)$ is actually even stronger than 
maximal cyclic monotonicity (which is required for $\eta=1$).
\end{remark}

%\clearpage
\subsection{Proximal Inclusion}
\label{subsec:prox_inclusion}

We start with the definition of
the l.s.c.~1-weakly-convex envelope.
\begin{definition}[L.s.c.~1-weakly-convex envelope]
Let $f:\euclidspace\rightarrow (-\infty,+\infty]$ be a proper function
such that $(f+(1/2)\norm{\cdot}^2)^*$ is proper as well.
Then, $(f+(1/2)\norm{\cdot}^2)^{**} - (1/2)\norm{\cdot}^2$
is the l.s.c.~1-weakly-convex envelope of $f$.
The notation $\widetilde{(\cdot)}$ will be used 
to denote the l.s.c.~1-weakly-convex envelope, such as
$\widetilde{f} :=(f+(1/2)\norm{\cdot}^2)^{**} - (1/2)\norm{\cdot}^2$.
The envelope $\widetilde{f}$ is the largest (proper) l.s.c.
 1-weakly-convex function $\phi:\euclidspace\rightarrow  (-\infty,+\infty]$
 such that $f(x) \geq \phi(x)$, $\forall x\in\euclidspace$.
\end{definition}

The first key result is presented below.

\begin{theorem}[Proximal inclusion]
\label{theorem:prox_inclusion} 

Let $f:\euclidspace\rightarrow (-\infty,+\infty]$ be a proper function
such that $(f+(1/2)\norm{\cdot}^2)^*$ is proper as well.
Then, the following inclusion holds
between the proximity operators of $f$ and its l.s.c.~1-weakly-convex
 envelope $\widetilde{f}$:
\begin{equation}
 \prox_f(x)\subset \prox_{\widetilde{f}}(x), ~\forall x\in\euclidspace,
\end{equation}
i.e., ${\rm gra~} \prox_f \subset {\rm gra~} \prox_{\widetilde{f}}$.
\end{theorem}
\begin{proof}
By the assumption, we have
 $\big(f+(1/2)\norm{\cdot}^2\big)^*\in\Gamma_0(\euclidspace)$,
and thus  
$\widetilde{f} + (1/2)\norm{\cdot}^2 =
\big(f+(1/2)\norm{\cdot}^2\big)^{**}\in\Gamma_0(\euclidspace)$
by Fact \ref{fact:fenchel_moreau}.
Hence, for every $u\in\euclidspace$, it follows that
\begin{align}
 &\hspace*{0em} \prox_f(x)= [\partial (f+(1/2)\norm{\cdot}^2)]^{-1}(x) \nonumber\\
& \hspace*{-.2em}\subset \partial [(f+(1/2)\norm{\cdot}^2)^*](x)
=  [\partial ((f+(1/2)\norm{\cdot}^2)^{**})]^{-1}(x)\nonumber\\
& \hspace*{.8em}=  [\partial
 (\widetilde{f}+(1/2)\norm{\cdot}^2)]^{-1}(x)= \prox_{\widetilde{f}}(x),
\end{align}
where the first and last equalities are due to Lemma
 \ref{lemma:prox_decomp},
the inclusion is due to Fact \ref{fact:subdifferential}.2,
and the third equality is due to 
Fact \ref{fact:subdifferential_convex}.2.
% Since $\widetilde{g}+ (1/2)\norm{\cdot}^2 =  (g+(1/2)\norm{\cdot}^2)^{**}\in \Gamma_0(\euclidspace)$,
% the maximal cyclic monotonicity of $\prox_{\widetilde{g}}$ 
% follows by
% Proposition \ref{proposition:max_cyc_mono_weak_convexity}.
\end{proof}

Theorem \ref{theorem:prox_inclusion} implies that,
if a discontinuous operator is a selection of
$\prox_f$ for a nonconvex function $f$,
it can also be expressed as a selection of
$\prox_{\widetilde{f}}$ for a 1-weakly-convex function $\widetilde{f}$,
which actually coincides with the l.s.c.~1-weakly-convex envelope of
$f$.
%
 % In general, if an operator is a selection of $\prox_f$ for a nonconvex function $f$, 
 % it is also a selection of $\prox_{\widetilde{f}}$ for
 % its l.s.c.~1-weakly-convex function
 % $\widetilde{f}$ (Theorem \ref{theorem:prox_inclusion}).
 % %
 % This fact, together with the examples presented in this subsection,
This fact indicates that, when seeking for a shrinkage operator as (a selection
       of) the proximity operator, one may restrict attention to 
 the class of weakly convex functions.
%

%
% {\color{blue}
% Theorem \ref{theorem:prox_inclusion} implies the following.
% Suppose that a discontinuous operator can be expressed as a selection of
% the proximity operator $\prox_f$ of some nonconvex function $f$.
% Then, it can also be expressed as a selection of
% the proximity operator $\prox_{\widetilde{f}}$ of 1-weakly convex
% function $\widetilde{f}$.
% }
%
%

We remark that Theorem \ref{theorem:prox_inclusion} is trivial if
$f+(1/2)\norm{\cdot}^2\in \Gamma_0(\euclidspace)$,
because $\widetilde{f}=f$ in that case by Fact \ref{fact:fenchel_moreau}.
Hence, our primary focus in Theorem \ref{theorem:prox_inclusion} is on
$\eta$-weakly convex functions for $\eta>1$,
although Theorem \ref{theorem:prox_inclusion} itself has no such a
restriction.
The following corollary is a direct consequence of 
Fact \ref{fact:weaklyconvex_necsuffcondition} ($\eta<1$),
Proposition
\ref{proposition:max_cyc_mono_weak_convexity} ($\eta=1$),
and Theorem \ref{theorem:prox_inclusion} 
($\eta>1$).

\begin{corollary}
\label{corollary:prox_weak_convexity}
Let $f+(\eta/2)\norm{\cdot}^2\in\Gamma_0(\euclidspace)$
for $\eta\in(-\infty,+\infty]$.
Then, the following statements hold.
\begin{enumerate}
 \item Assume that $\eta \leq 1$.
Then, $\prox_{f}$ is maximally cyclically monotone
(see Remark \ref{remark:max_monotone_weak_convexity}).
In particular, if $\eta <1$,
the proximity operator is single valued, and
$\sprox_f$ is $(1-\eta)^{-1}$-Lipschitz continuous.

 \item 
Assume that $\eta >1$;
more specifically, assume that
       $f+(1/2)\norm{\cdot}^2\not\in\Gamma_0(\euclidspace)$).
Assume also that (i) $(f+(1/2)\norm{\cdot}^2)^*$ is proper and that
(ii) there exists $\hat{x}\in \dom \partial
       (f+\frac{1}{2}\norm{\cdot}^2)^{**}$
such that $(f+\frac{1}{2}\norm{\cdot}^2)^{**}(\hat{x}) \neq 
f(\hat{x})+\frac{1}{2}\norm{\hat{x}}^2$.
Then,
$ \prox_f(\hat{x})\subsetneq \prox_{\widetilde{f}}(\hat{x})$, which implies that
$\prox_{f}$ cannot be maximally monotone.\footnote{
Assumption (ii) might be able to be removed.
}
\end{enumerate}

\end{corollary}

\begin{remark}[On Theorem \ref{theorem:prox_inclusion}]
\label{remark:prox_inclusion}
\begin{enumerate}
 \item 
If the properness assumption of 
$(f+(1/2)\norm{\cdot}^2)^*$ is violated, it holds that
$\varnothing = \dom (f+(1/2)\norm{\cdot}^2)^*
\supset \dom \partial (f+(1/2)\norm{\cdot}^2)^*$
{\rm \cite[Proposition 16.4]{combettes}},
which implies that $\dom \partial (f+(1/2)\norm{\cdot}^2)^* = \varnothing$.
Thus, by Lemma \ref{lemma:prox_decomp}, 
it can be verified,
for every $x\in\euclidspace$,
that
$\varnothing = \partial (f+(1/2)\norm{\cdot}^2)^*(x)
\supset \partial (f+(1/2)\norm{\cdot}^2)^{-1}(x)
=\prox_f(x)$
 (cf.~Section \ref{sec:preliminaries}),
which implies that
$\prox_f(x)=\varnothing$.

 \item By Fact \ref{fact:fenchel_moreau},
$\widetilde{f}=f$ if and only if
$f+(1/2)\norm{\cdot}^2\in\Gamma_0(\euclidspace)$.

\item The operator
$\prox_{\widetilde{f}}$ 
is maximally cyclically monotone.

\end{enumerate}
\end{remark}

%\clearpage

\subsection{Proximal Conversion}
\label{subsec:conversion}

The second key result presented below gives a principled way of 
converting the set-valued proximity operator of 1-weakly convex
function to a MoL-Grad denoiser.

%%%
\begin{theorem}[Proximal conversion]
\label{theorem:prox_conversion}
Let $\phi$ be a function such that
$\phi+(1/2)\norm{\cdot}^2\in\Gamma_0(\euclidspace)$.
Then, the proximity operator of the
$(\delta+1)^{-1}$-weakly convex function $\phi/(\delta+1)$
for the relaxation parameter $\delta\in\real_{++}$ can be expressed as
\begin{equation}
\sprox_{\phi/(\delta+1)} = 
 \left[
\prox_{\phi}^{-1}
+\delta \id
\right]^{-1}\circ 
(\delta+1)\id,
\label{eq:relation_hard_firm}
\end{equation}
which is $(1+1/\delta)$-Lipschitz continuous.
\end{theorem}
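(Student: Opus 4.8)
The plan is to push everything through the subdifferential calculus via Lemma~\ref{lemma:prox_decomp}, exactly as in the proof of Proposition~\ref{proposition:max_cyc_mono_weak_convexity}. First I would abbreviate $g := \phi + (1/2)\norm{\cdot}^2 \in \Gamma_0(\euclidspace)$ and record the consequence of Lemma~\ref{lemma:prox_decomp} with index $\gamma = 1$, namely $\prox_\phi = (\partial g)^{-1}$, hence $\prox_\phi^{-1} = \partial g$ (double inversion of a set-valued operator returns the operator, since the two graphs coincide).

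Before computing, I would verify that the left-hand side is legitimately a single-valued operator. Set $\varrho := (\delta+1)^{-1} \in (0,1)$ for $\delta\in\real_{++}$; then $\phi/(\delta+1) + (\varrho/2)\norm{\cdot}^2 = (\delta+1)^{-1} g \in \Gamma_0(\euclidspace)$, so $\phi/(\delta+1)$ is $\varrho$-weakly convex with $\varrho < 1$, and Corollary~\ref{corollary:prox_weak_convexity}(a) guarantees that $\prox_{\phi/(\delta+1)}$ is single-valued (indeed $(\delta+1)/\delta$-Lipschitz continuous), so writing $\sprox_{\phi/(\delta+1)}$ is justified.

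The core is an algebraic identity for the integrand of the proximity operator (whose index is $1$). Expanding,
\begin{equation}
\frac{\phi}{\delta+1} + \frac{1}{2}\norm{\cdot}^2
= \frac{1}{\delta+1}\,g + \frac{\delta}{2(\delta+1)}\norm{\cdot}^2 .
\nonumber
\end{equation}
Since $\norm{\cdot}^2$ is finite and continuous on $\euclidspace$, the subdifferential sum rule applies without any constraint qualification, and together with positive scaling it gives
$\partial\big(\phi/(\delta+1) + (1/2)\norm{\cdot}^2\big) = (\delta+1)^{-1}\partial g + \frac{\delta}{\delta+1}\id = (\delta+1)^{-1}\big(\partial g + \delta\,\id\big)$.
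Lemma~\ref{lemma:prox_decomp} then yields $\prox_{\phi/(\delta+1)} = \big[(\delta+1)^{-1}(\partial g + \delta\,\id)\big]^{-1}$, and the elementary rule $(\lambda\mathsf{A})^{-1} = \mathsf{A}^{-1}\circ(\lambda^{-1}\id)$ for $\lambda > 0$ (with $\lambda = (\delta+1)^{-1}$) converts this into $(\partial g + \delta\,\id)^{-1}\circ(\delta+1)\id = (\prox_\phi^{-1} + \delta\,\id)^{-1}\circ(\delta+1)\id$, which is \eqref{eq:relation_hard_firm}.

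I do not expect a serious obstacle here; the argument is short and mirrors the computation already used for Proposition~\ref{proposition:max_cyc_mono_weak_convexity}. The only points demanding care are the bookkeeping that the proximity operator in the statement carries index $1$, and the justification of the subdifferential sum rule — which is immediate because the added quadratic is everywhere finite and continuous — so that the identification $\prox_\phi^{-1} = \partial g$ and the scaling manipulation can be chained cleanly.
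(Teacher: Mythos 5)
Your proposal is correct and follows essentially the same route as the paper: both rest on Lemma~\ref{lemma:prox_decomp}, the subdifferential sum rule for the everywhere-continuous quadratic perturbation, and the identification $\prox_{\phi}^{-1}=\partial\big(\phi+(1/2)\norm{\cdot}^2\big)$. The only difference is bookkeeping --- the paper invokes Lemma~\ref{lemma:prox_decomp} directly with index $\gamma=1/(\delta+1)$ applied to $f:=\phi$, whereas you use the index-$1$ version on $\phi/(\delta+1)$ and recover the $(\delta+1)\id$ factor via the inverse-of-scaled-operator rule; your extra remarks on single-valuedness are a welcome addition the paper leaves implicit.
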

%%%%%
%
%%%%%
\begin{proof}
Since $\phi$ is 1-weakly convex, it is clear that
$\phi/(\delta+1)$ is $\eta$-weakly convex for
$\eta:= (\delta+1)^{-1}\in(0,1)$.
Letting $f:=\phi$ and $\gamma:=1/(\delta+1)$ 
in Lemma \ref{lemma:prox_decomp} yields
\begin{equation}
 \sprox_{\phi/(\delta+1)} = \left[
\partial (\phi + ((\delta+1)/2)\norm{\cdot}^2)
\right]^{-1}\circ (\delta+1)\id.
\end{equation}
Here, since $\phi+(1/2)\norm{\cdot}^2\in\Gamma_0(\euclidspace)$,
it holds that
$\partial (\phi + ((\delta+1)/2)\norm{\cdot}^2)
= 
\partial (\phi + (1/2)\norm{\cdot}^2)
+ \partial [(\delta/2)\norm{\cdot}^2]
= 
\partial (\phi + (1/2)\norm{\cdot}^2)
+ \delta \id
=  \prox_{\phi}^{-1} + \delta \id$, 
where the last equality can be verified
by letting $f:=\phi$ and $\gamma:=1$ in
Lemma \ref{lemma:prox_decomp}.
From Fact \ref{fact:weaklyconvex_necsuffcondition},
the Lipschitz constant is given by
$(1-\eta)^{-1}=1+1/\delta$,
which completes the proof.
\end{proof}

\begin{remark}
\label{remark:continuous_relaxation}

Theorems
\ref{theorem:prox_inclusion}
and \ref{theorem:prox_conversion}
give a constructive proof for 
the existence of continuous relaxation of discontinuous operator, say $T$,
provided that $T$ is a selection of $\prox_{f}$ for some
proper function  $f:\euclidspace\rightarrow (-\infty,+\infty]$.
Such an $f$ exists if and only if there exists 
a proper function $\psi:\euclidspace\rightarrow (-\infty,+\infty]$
such that $T^{-1}(u)\subset \partial \psi(u)$, $\forall u\in \range T$.
Concrete examples will be given in Section \ref{sec:application}.
\end{remark}

\subsection{Surjectivity of Mapping and Continuity of Its Associated Function}
\label{subsec:surjectivity}

An interplay between
surjectivity of an operator $T$ and 
continuity (under weak convexity) of a function 
can be seen through a `lens' of proximity operator.\footnote{
It is known that convexity of $f:\euclidspace \rightarrow \real$
implies continuity of $f$ when $\euclidspace$ is finite dimensional 
\cite[Corollary 8.40]{combettes}.
}

\begin{lemma}
\label{lemma:domphi_convexity_continuity}
Let $f:\euclidspace\rightarrow (-\infty,+\infty]$ be 
a proper function.
Then, the following two statements are equivalent.
\begin{enumerate}
 \item $\dom \partial f = \euclidspace$.
 \item $f:\euclidspace\rightarrow \real$ is convex and continuous over
       $\euclidspace$.
\end{enumerate}
\end{lemma}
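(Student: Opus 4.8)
The plan is to prove the two implications separately, leaning on the Fenchel--Moreau machinery already recalled in Section \ref{sec:preliminaries} and on the standard fact (quoted in the footnote) that a finite-valued convex function on a finite-dimensional space is continuous. For the direction 2) $\Rightarrow$ 1), if $f:\euclidspace\rightarrow\real$ is convex and continuous on all of $\euclidspace$, then in particular $f$ is finite everywhere, so $\dom f=\euclidspace$; continuity of a convex function at an interior point of its domain is exactly the condition guaranteeing nonemptiness of the subdifferential there (by the supporting-hyperplane/Hahn--Banach argument underlying \cite[Proposition 16.27 or 16.14]{combettes}), so $\partial f(x)\neq\varnothing$ for every $x\in\euclidspace$, i.e.~$\dom\partial f=\euclidspace$.

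For the harder direction 1) $\Rightarrow$ 2), suppose $\dom\partial f=\euclidspace$. First observe that $\dom\partial f\subset\dom f$, so $\dom f=\euclidspace$, i.e.~$f$ is finite-valued. Next, at every $x$ there is a subgradient $z\in\partial f(x)$, and the subgradient inequality in \refeq{eq:def_subdifferential} gives the continuous affine minorant $f(y)\geq\innerprod{y-x}{z}+f(x)$ valid for all $y$; hence $\dom f^*\neq\varnothing$, so by \cite[Proposition 13.45]{combettes} $f^{**}=\breve f$ is the l.s.c.~convex envelope of $f$, which is proper, l.s.c.,~and convex, hence lies in $\Gamma_0(\euclidspace)$. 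The key step is to show $f=f^{**}$. For this, use the inclusion recalled in Section \ref{subsec:properness}: $\partial f(x)\subset(\partial f^*)^{-1}(x)$, equivalently $z\in\partial f(x)\Rightarrow x\in\partial f^*(z)\Rightarrow z\in\partial f^{**}(x)$ (applying the same inclusion to the convex function $f^*$, whose subdifferential relations are the tight ones of Fact/\cite[Corollary 16.30]{combettes}). Combined with the Fenchel--Young identity, $z\in\partial f(x)$ together with $z\in\partial f^{**}(x)$ forces $f(x)+f^*(z)=\innerprod{x}{z}=f^{**}(x)+f^*(z)$, whence $f(x)=f^{**}(x)$; since $x$ was arbitrary, $f=f^{**}\in\Gamma_0(\euclidspace)$. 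Thus $f$ is convex, finite-valued on $\euclidspace$, and by \cite[Corollary 8.40]{combettes} (finite dimensionality) continuous on $\euclidspace$.

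The main obstacle is the passage $\dom\partial f=\euclidspace\Rightarrow f=f^{**}$: one must rule out the possibility that $f$ is a non-l.s.c.~or non-convex finite function that nevertheless admits a subgradient everywhere. The resolution above — that the existence of a subgradient $z$ at $x$ simultaneously certifies $z\in\partial f(x)$ and, via the generic inclusion $\partial f\subset(\partial f^*)^{-1}$ and convexity of $f^*$, $z\in\partial f^{**}(x)$, so the Fenchel--Young equality pins $f(x)$ to $f^{**}(x)$ — is the crux. One should be a little careful that this argument presumes $\dom f^*\neq\varnothing$ (so that $f^{**}=\breve f$ rather than $\equiv-\infty$), which is precisely what the existence of one subgradient delivers via the affine minorant. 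After that, the two cited ``textbook'' facts (subdifferentiability of convex functions at interior domain points; continuity of finite convex functions in finite dimension) close both directions with no further difficulty.
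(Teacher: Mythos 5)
Your proof is correct in substance and follows essentially the same route as the paper: both directions hinge on showing that $\dom\partial f=\euclidspace$ forces $f=f^{**}\in\Gamma_0(\euclidspace)$ and then extracting continuity, while 2) $\Rightarrow$ 1) is the standard subdifferentiability of a convex function at points of continuity. The one genuine difference is that where the paper simply cites \cite[Proposition 16.5]{combettes} for the implication $\partial f(x)\neq\varnothing\Rightarrow f(x)=f^{**}(x)$, you re-derive it from first principles via the affine minorant, the inclusion $(\partial f)^{-1}\subset\partial f^*$, and the Fenchel--Young equality; that argument is sound and makes the proof self-contained. The only point to fix is the very last step: the lemma is stated for a general real Hilbert space, so you cannot invoke \cite[Corollary 8.40]{combettes}, which requires finite dimension (in infinite dimensions a finite-valued convex function need not be continuous). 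But you have already established $f=f^{**}\in\Gamma_0(\euclidspace)$ and $\dom f=\euclidspace$, so lower semicontinuity is available and the paper's citation \cite[Proposition 16.27]{combettes} (continuity of a proper l.s.c.\ convex function on $\interior\dom f$) closes the argument in full generality; with that substitution your proof coincides with the paper's up to the unpacked textbook fact.
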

\begin{proof}
 1) $\Rightarrow$ 2): 
By 
\cite[Proposition 16.5]{combettes} and
Fact \ref{fact:fenchel_moreau},
it holds that
$\dom \partial f = \euclidspace 
\Rightarrow f=f^{**}
\Leftrightarrow f\in\Gamma_0(\euclidspace)$.
Hence, invoking
\cite[Propositions 16.4 and 16.27]{combettes},
we  obtain $\euclidspace = \dom \partial f \subset \dom f =
 \euclidspace
\Rightarrow {\rm int} ~ \dom f = \cont f = \euclidspace$.

\noindent 2) $\Rightarrow$ 1): Clear from \cite[Proposition 16.17(ii)]{combettes}.
\end{proof}

\begin{proposition}
\label{proposition:surjectivity_and_continuityweakconvexity} 

Let $f:\euclidspace\rightarrow (-\infty,+\infty]$
be  a proper l.s.c.~function.
Define an operator $U:\euclidspace\rightarrow \euclidspace$ 
such that
$U(x)\in \prox_f (x)$ for every $x\in\euclidspace$.
Consider the following two statements.
\begin{enumerate}
 \item  $\range U = \euclidspace$. 

 \item $f + (1/2)\norm{\cdot}^2$ is convex and continuous.
\end{enumerate}
Then, 1) $\Rightarrow$ 2).
Assume that $U=  \sprox_{f}$; i.e.,
$f + (1/2)\norm{\cdot-x}^2$ has a unique
 minimizer for every $x\in\euclidspace$.
Then, 1) $\Leftrightarrow$ 2).

\end{proposition}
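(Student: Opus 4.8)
The plan is to prove the two implications separately, using the decomposition machinery of Lemma~\ref{lemma:prox_decomp} together with Lemma~\ref{lemma:domphi_convexity_continuity} to reduce everything to a statement about the domain of a subdifferential.

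\textbf{Implication 1) $\Rightarrow$ 2).} First I would observe that, by Lemma~\ref{lemma:prox_decomp} with $\gamma:=1$, we have $\prox_f = [\partial(f+(1/2)\norm{\cdot}^2)]^{-1}$, hence $x\in\prox_f(U(x))$ means $U(x)\in[\partial(f+(1/2)\norm{\cdot}^2)]^{-1}(x)$ for every $x$, i.e.\ every point $x\in\euclidspace$ lies in $\range([\partial(f+(1/2)\norm{\cdot}^2)]^{-1}) = \dom\partial(f+(1/2)\norm{\cdot}^2)$. Thus $\range U = \euclidspace$ forces $\dom\partial(f+(1/2)\norm{\cdot}^2) = \euclidspace$. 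Now I would apply Lemma~\ref{lemma:domphi_convexity_continuity} to the proper function $f+(1/2)\norm{\cdot}^2$ (properness of $f$ gives properness of this function): its conclusion is precisely that $f+(1/2)\norm{\cdot}^2$ is convex and continuous on $\euclidspace$, which is statement 2). Note this direction does not need l.s.c.\ of $f$ nor single-valuedness; the hypothesis $\range U = \euclidspace$ already does all the work, so the ``Consider the following'' phrasing and the separate second clause are there only because the reverse implication needs more.

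\textbf{Implication 2) $\Rightarrow$ 1) under $U=\sprox_f$.} Conversely, suppose $f+(1/2)\norm{\cdot}^2$ is convex and continuous. Then by Lemma~\ref{lemma:domphi_convexity_continuity} (direction 2)$\Rightarrow$1)) we get $\dom\partial(f+(1/2)\norm{\cdot}^2) = \euclidspace$. I want to conclude $\range\sprox_f = \euclidspace$. Again using $\prox_f = [\partial(f+(1/2)\norm{\cdot}^2)]^{-1}$, we have $\range\prox_f = \dom\partial(f+(1/2)\norm{\cdot}^2) = \euclidspace$ as a set-valued statement, meaning for every $y\in\euclidspace$ there is some $x$ with $y\in\prox_f(x)$. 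Since we are assuming $U=\sprox_f$ is single-valued, $\prox_f(x)=\{\sprox_f(x)\}$, so $y = \sprox_f(x) \in \range\sprox_f$; hence $\range\sprox_f=\euclidspace$, which is 1).

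\textbf{Main obstacle.} The only genuinely delicate point is bookkeeping the identity $\range\prox_f = \dom\partial(f+(1/2)\norm{\cdot}^2)$: because the proximity operator is set-valued, one must be careful that $\range$ of a set-valued operator $\mathsf{T}$ equals $\dom\mathsf{T}^{-1}$, and that Lemma~\ref{lemma:prox_decomp}'s composition with $\gamma^{-1}\id = \id$ (for $\gamma=1$) does not alter the range. Everything else is a direct invocation of Lemma~\ref{lemma:domphi_convexity_continuity}, so the proof is essentially a transport of that lemma through the bijective correspondence $\phi\leftrightarrow\prox_{\phi-(1/2)\norm{\cdot}^2}$; I would make sure to state explicitly where single-valuedness of $\sprox_f$ is (and is not) used, since that is the whole reason the equivalence is only claimed under the extra assumption.
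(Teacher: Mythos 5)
Your proof is correct and follows essentially the same route as the paper: both directions reduce, via $\prox_f=[\partial(f+(1/2)\norm{\cdot}^2)]^{-1}$ from Lemma~\ref{lemma:prox_decomp}, to the identity $\range \prox_f=\dom \partial(f+(1/2)\norm{\cdot}^2)$ and then invoke Lemma~\ref{lemma:domphi_convexity_continuity} in each direction, with single-valuedness used only for the converse. (One cosmetic slip: the defining relation is $U(x)\in\prox_f(x)$, equivalently $x\in\partial(f+(1/2)\norm{\cdot}^2)(U(x))$, not ``$x\in\prox_f(U(x))$''; the chain $\euclidspace=\range U\subset\range[\partial(f+(1/2)\norm{\cdot}^2)]^{-1}=\dom\partial(f+(1/2)\norm{\cdot}^2)$ that you then use is exactly the paper's.)
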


\begin{proof}
 By the assumption, it holds that
$\euclidspace 
= \range U
\subset \range [\partial (f+(1/2)\norm{\cdot}^2)]^{-1} 
= \dom \partial (f+(1/2)\norm{\cdot}^2)
= \euclidspace$.
Hence, by Lemma \ref{lemma:domphi_convexity_continuity},
$\range U=\euclidspace$ implies convexity and continuity of $f+(1/2)\norm{\cdot}^2$.
Under the assumption that $U=  \sprox_{f}$, 
by Lemma \ref{lemma:domphi_convexity_continuity} again, 
$\range U= \dom \partial (f+(1/2)\norm{\cdot}^2) = \euclidspace$.
\end{proof}

%\clearpage

\section{Application: Continuous Relaxation of Discontinuous Operator}
\label{sec:application}

As an illustrative example, we first show how hard shrinkage
is converted to a continuous operator by leveraging Theorems
\ref{theorem:prox_inclusion} and \ref{theorem:prox_conversion}.
We then apply the same idea to the ROWL-based discontinuous operator to
obtain its continuous relaxation.

\subsection{An Illustrative Example: Converting Discontinuous Hard
  Shrinkage to Continuous Firm Shrinkage}
\label{subsec:l0_mc}

The hard shrinkage operator with the threshold $\tau\in\real_{++}$
is defined by \cite{blumensath08}
 \begin{equation}
\hspace*{-.5em}
  {\rm hard}_{\tau}:
\real \!\rightarrow \real:
 x\mapsto x 1(\abs{x}> \tau):=
 \begin{cases}
 0, & \!\! \mbox{if } \abs{x} \leq \tau, \\
 x, &\!\! \mbox{if } \abs{x} > \tau,
 \end{cases}
 \end{equation}
for which it holds that
\begin{align}
\hspace*{-1em}{\rm hard}_{\tau}(x)\in
\prox_{g_\tau} (x) = \begin{cases}
 \{0\}, & \mbox{if } \abs{x} < \tau,  \\
 \{0,x\},  & \mbox{if } \abs{x}= \tau,  \\
\{x\}, & \mbox{if } \abs{x}> \tau,
			\end{cases}\label{eq:prox_g}
\end{align}
where
\begin{equation}
 g_\tau(x):= \frac{\tau^2}{2}\norm{x}_0=
 \begin{cases}
 0, & \mbox{if } x=0, \\
\dfrac{\tau^2}{2}, & \mbox{if } x\neq 0,
 \end{cases}~~~\forall x\in \real.
\end{equation}
%
%
% As $(\norm{\cdot}_0 + (1/2)(\cdot)^2)^{**} (x)$ is
% the  convex envelope of 
% $\norm{\cdot}_0+(1/2)(\cdot)^2$,
The l.s.c.~1-weakly convex envelope
$\widetilde{g}_\tau:=  
(g_\tau +(1/2)(\cdot)^2)^{**} - (1/2)(\cdot)^2$ of $g_\tau$
is given by
(see Fig.~\ref{fig:functions})
\begin{equation}
\hspace*{-.7em}
\widetilde{g}_\tau(x) \!=\!
\tau \varphi_{{\tau}}^{\rm MC}(x) =
 \begin{cases}
 \tau\abs{x} - \dfrac{1}{2} x^2, & 
 \!  \!\!\mbox{if }\! \abs{x}\leq \tau,  \\
  \dfrac{\tau^2}{2}, & 
  \! \!\!\mbox{if }\! \abs{x}> \tau.
		\end{cases}
\label{eq:gtilde}
\end{equation}
Here,
$\varphi_{\tau_2}^{\rm MC}(x):= 
\begin{cases}
\abs{x} - \frac{1}{2\tau_2}x^2, & \mbox{if } \abs{x}\leq \tau_2, \\
\frac{1}{2}\tau_2, & \mbox{if } \abs{x}> \tau_2,
\end{cases}
$
is the MC penalty
of the parameter $\tau_2\in\real_{++}$
\cite{zhang,selesnick}.

 \begin{figure}[t!]
  \psfrag{L0}[Bl][Bl][.7]{$\norm{\cdot}_0$}
  \psfrag{L0+}[Bl][Bl][.7]{$\norm{\cdot}_0+(1/2)(\cdot)^2$}
  \psfrag{MC}[Bl][Bl][.7]{$\widetilde{\norm{\cdot}}_0$}
  \psfrag{MC rho}[Bl][Bl][.7]{$\widetilde{\norm{\cdot}}_0/(\delta+1)$}
  \psfrag{biconjugation}[Bl][Bl][.7]{$(\norm{\cdot}_0 + (1/2)(\cdot)^2)^{**}$}
  \psfrag{x}[Bl][Bl][1]{$x$}

 \centering
\includegraphics[width=8cm]{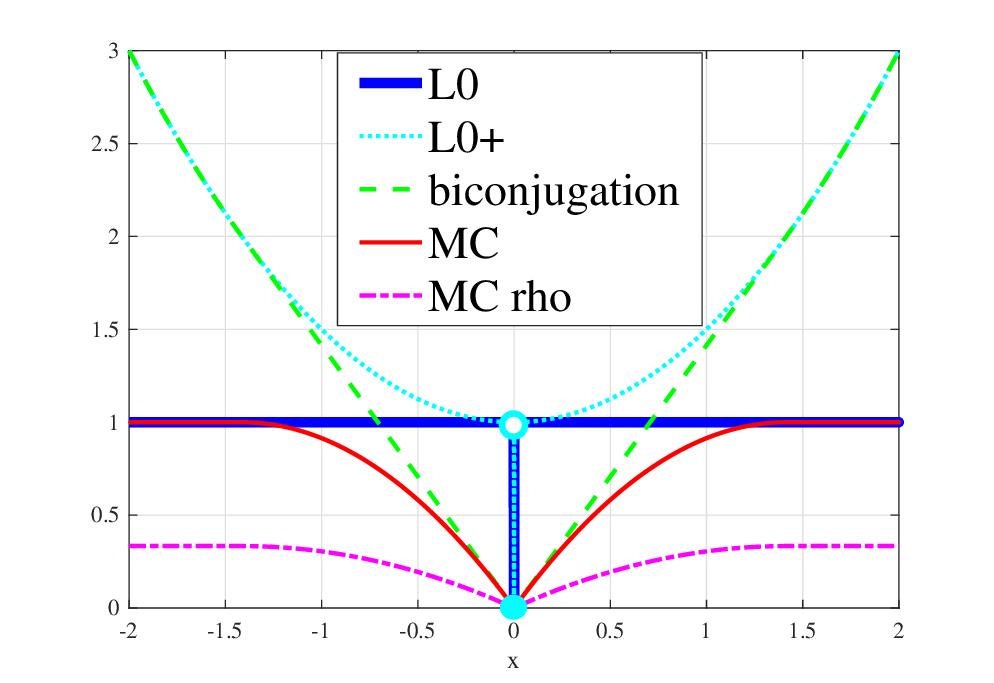}
% \begin{tabular}{cc}
%  \subfigure[]{
%  \includegraphics[width=3cm]{images/hard.eps}
%  }
%  &
% \subfigure[]{
% \includegraphics[width=5cm]{funct.eps}
% }
% \end{tabular}

 \caption{$\ell_0$ pseudo norm
$\norm{\cdot}_0$,
 its l.s.c.~1-weakly-convex envelope
$\widetilde{\norm{\cdot}}_0$, and related functions.
The biconjugate
$(\norm{\cdot}_0 + \norm{\cdot}^2/2)^{**}$ is 
the l.s.c.~convex envelope of
$\norm{\cdot}_0 + \norm{\cdot}^2/2$.
}
 \label{fig:functions}
 \end{figure}

The proximity operator of $\widetilde{g}_\tau$ is given
by
\begin{align}
\prox_{\widetilde{g}_\tau}:&~ x\mapsto
 \begin{cases}
  \{0\}, & \mbox{if } \abs{x} < \tau,  \\
  \overline{\rm conv} \{0,x\},  & \mbox{if } \abs{x}= \tau,
  \\
  \{x\}, & \mbox{if } \abs{x}> \tau,
\end{cases}
\label{eq:prox_gtilde}
\end{align}
where $\overline{\rm conv}$ denotes the closed convex hull
($\overline{\rm conv}\{0,\tau\}= [0,\tau]$ and
$\overline{\rm conv}\{0,-\tau\}= [-\tau,0]$).
Comparing \eqref{eq:prox_g} and \eqref{eq:prox_gtilde},
it can be seen that
\begin{equation}
 ({\rm hard}_{\tau}(x)\in)
\prox_{g_\tau}(x) \subset
\prox_{\widetilde{g}_\tau}(x),~
\forall x\in \real,
\label{eq:hard_inclusion}
\end{equation}
as consistent with Theorem \ref{theorem:prox_inclusion}.
This implies that ${\rm hard}_{\tau}$ is also {\em a selection} of
the proximity operator
$\prox_{\widetilde{g}_\tau}$ of the
1-weakly convex function $\widetilde{g}_\tau$,
which is maximally monotone.
Indeed, $\prox_{\widetilde{g}_\tau}$ in
\eqref{eq:prox_gtilde} is
the (unique) maximally monotone extension of ${\rm hard}_{\tau}$
{\rm \cite{bayram15}} (cf.~Remark \ref{remark:prox_inclusion}).
%
% It should be mentioned that 
% the relation between discontinuous shrinkage operators and weakly convex functions
% has been observed previously in {\rm \cite{kowalski_icip14}}.
% %

We now invoke
Theorem \ref{theorem:prox_conversion}
to obtain the continuous operator
\begin{align}
\sprox_{\widetilde{g}_\tau/(\delta+1)} =&~
\Big[
\prox_{\widetilde{g}_\tau}^{-1} + \delta {\rm Id}
\Big]^{-1} \circ (\delta+1){\rm Id}\nonumber\\
=&~ {\rm firm}_{\tau/(\delta+1),\tau},
\label{eq:sprox_firm}
\end{align}
where
the firm shrinkage operator \cite{gao97}
for the thresholds $\tau_1\in \real_{++}$,
$\tau_2 \in (\tau_1,+\infty)$,
is defined by
${\rm firm}_{\tau_1,\tau_2}:=
\sprox_{\tau_1 \varphi_{\tau_2}^{\rm MC}}
:\real\rightarrow \real:
x \mapsto 0 ~
\mbox{if } \abs{x}\leq\tau_1;
x \mapsto {\rm sign}(x)\frac{\tau_2 (\abs{x} - \tau_1)}{\tau_2 -
 \tau_1} ~
\mbox{if }\tau_1 < \abs{x} \leq \tau_2;
x \mapsto x ~\mbox{if } \abs{x}>\tau_2$.
We remark that $\tau_1 \varphi_{\tau_2}^{\rm MC}$ is
$\tau_1/\tau_2$-weakly convex
($\widetilde{g}_\tau/(\delta+1)$ in \eqref{eq:sprox_firm} 
is $1/(\delta+1)$-weakly convex)
and its ``single-valued'' proximity operator gives firm
shrinkage, while $\widetilde{g}_\tau$ in \eqref{eq:hard_inclusion}
is 1-weakly convex and a selection of its ``set-valued'' proximity
operator gives hard shrinkage.
We also mention that the limit of the continuous operator 
${\rm firm}_{\tau/(\delta+1),\tau}$
with respect to the relaxation parameter $\delta$
coincides with the discontinuous operator ${\rm hard}_{\tau}$;
i.e., $ \lim_{\delta \downarrow 0} 
              {\rm firm}_{\tau/(\delta+1),\tau}(x) = 
\widehat{\rm hard}_{\tau}(x)
=x 1(\abs{x} \geq \tau) \in \prox_{g_{\tau}}(x)$
for every $x\in\real$
(see Fig.~\ref{fig:firm}).
On the other hand, the soft shrinkage operator with threshold
$\tau\in\real_{++}$ is characterized by
${\rm soft}_\tau = \lim_{\tau_2\rightarrow +\infty} {\rm firm}_{\tau,\tau_2}$.

 \begin{figure}[t!]
  \psfrag{x}[Bc][Bc][1]{$x$}
  \psfrag{delta}[Bc][Bc][1]{$\delta\downarrow 0$}
  \psfrag{delta1}[Br][Br][1]{$\delta =1$}
  \psfrag{delta3}[Br][Br][1]{$\delta =3$}
  \psfrag{delta13}[Br][Br][1]{$\delta =1/3$}

 \centering
\includegraphics[width=6cm]{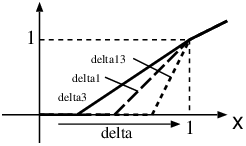}

%  \caption{The graphs of 
% $ {\rm firm}_{1/(\delta+1),1}$ for $\delta:=1/3,1,3$.}
 \caption{Pointwise convergence of ${\rm firm}_{1/(\delta+1),1}$ to
  $\widehat{\rm hard}_1$ as $\delta\downarrow 0$.}
 \label{fig:firm}
 \end{figure}
% solid line, dashed line, dotted line

% \begin{equation}
% \hspace*{-.1em} {\rm firm}_{\tau_1,\tau_2}(x) \!:=\!
% \begin{cases}
% 0 & \!\!
% \mbox{if } \abs{x}\leq\tau_1 \\
% {\rm sign}(x)\frac{\tau_2 (\abs{x} - \tau_1)}{\tau_2 -
%  \tau_1} & 
% \!\!\mbox{if }\tau_1 < \abs{x} \leq \tau_2 \\
% x & \!\!\mbox{if } \abs{x}>\tau_2.
% \end{cases}
% \label{eq:firm}
% \end{equation}
%

Fig.~\ref{fig:shrinkage}(a)--(g) illustrates the process of obtaining
a continuous relaxation $\sprox_{\frac{2}{3}\widetilde{g}_1}$
of the discontinuous operator hard$_1$, corresponding to the case of
$\tau:=1$ and $\delta:=1/2$.
Comparing
the graphs of the discontinuous operator hard$_1$,
$\prox_{g_1}$, and $\prox_{\widetilde{g}_1} (=:\mathsf{H})$, 
one can observe that 
(see Theorem \ref{theorem:prox_inclusion} for the second inclusion)
\begin{equation}
 {\rm gra}~{\rm hard}_1
\subset
{\rm gra}~\prox_{g_1} 
\subset 
{\rm gra}~ \prox_{\widetilde{g}_1}.
\end{equation}
The maximally monotone operator $\prox_{\widetilde{g}_1}$ 
(see Remark \ref{remark:max_monotone_weak_convexity}) is then converted
to $\sprox_{(2/3)\widetilde{g}_1}= {\rm firm}_{2\tau/3,\tau}$ 
in a step-by-step manner using \eqref{eq:sprox_firm} 
(which is based on Theorem \ref{theorem:prox_conversion}).
%
% and Fig.~\ref{fig:shrinkage}(b) shows how weak convexity changes
% accordingly.
%
Inspecting the figure 
under Corollary \ref{corollary:prox_weak_convexity},
Figs.~\ref{fig:shrinkage}(b) and \ref{fig:shrinkage}(h)
correspond to 
Corollary \ref{corollary:prox_weak_convexity}.2 
(not maximally monotone)
for $\eta :=+\infty$
and $\eta:=4$, respectively,
and 
Figs.~\ref{fig:shrinkage}(c) and \ref{fig:shrinkage}(g)
correspond to 
Corollary \ref{corollary:prox_weak_convexity}.1
(maximally monotone)
for $\eta:=1$ and $\eta:=2/3$, respectively.

Letting $\eta:=1/(\delta +1)$, 
\eqref{eq:sprox_firm} for $\delta\in\real_{++}$ concerns 
the case of $\eta\in (0,1)$.
In the case of $\eta\in[1,+\infty)$, on the other hand,
the proximity operator
of the $\eta$-weakly convex function  $\eta\widetilde{g}_\tau$
is set-valued.
Actually, it is not difficult to verify that 
$\prox_{\eta \widetilde{g}_\tau} = \prox_{\eta g_\tau}$
for every $\eta\in(1,+\infty)$.
Note here that this is not true for $\eta:=1$, 
i.e.,
$\prox_{\widetilde{g}_\tau} \neq \prox_{g_\tau}$,
as can be seen from Figs.~\ref{fig:shrinkage}(b) and \ref{fig:shrinkage}(c).
See Fig.~\ref{fig:shrinkage}(h) for the case of $\eta:=4$.

 \begin{figure}[t!]
  \psfrag{delta}[Bl][Bl][.8]{$\delta\downarrow 0$}
  \psfrag{3/2}[Bc][Bc][.8]{$\dfrac{3}{2}$}
  \psfrag{2/3}[Bc][Bc][.8]{$\dfrac{2}{3}$}
  \psfrag{x}[Bl][Bl][1]{$x$}
  \psfrag{inf}[Bl][Bl][1]{$\infty$}
  \psfrag{a}[Bl][Bl][1]{(i)}
  \psfrag{bd}[Bl][Bl][1]{(ii)}
  \psfrag{f}[Bl][Bl][1]{(vi)}

\centering
\begin{tabular}{cc}
\subfigure[hard$_1$]{
\includegraphics[width=3.6cm]{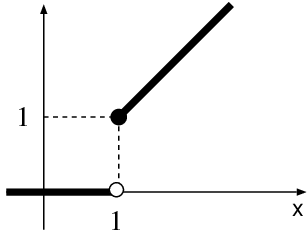}
}
 & 
\subfigure[$\prox_{g_1}$ ($\eta=+\infty$)]{
\includegraphics[width=3.6cm]{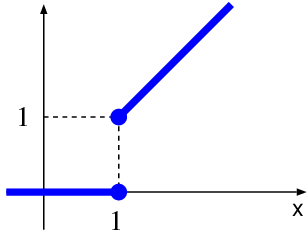}
}
\end{tabular}

\begin{tabular}{cc}
\subfigure[$\mathsf{H}:= \prox_{\widetilde{g}_1}$
($\eta=1$)]{
\includegraphics[width=3.6cm]{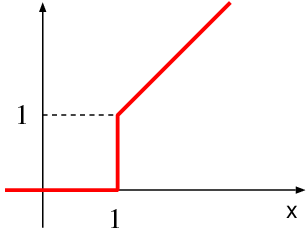}
}
 & 
\subfigure[$\mathsf{H}^{-1}$]{
\includegraphics[width=3.6cm]{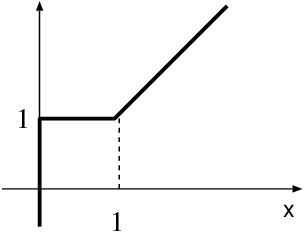}
}
\end{tabular}

\begin{tabular}{cc}
\subfigure[$\mathsf{H}^{-1}+0.5 {\rm Id}$]{
\includegraphics[width=3.6cm]{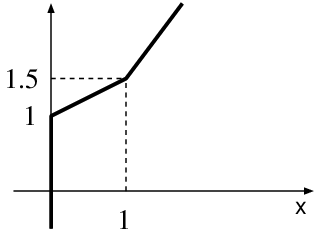}
}
 & 
\subfigure[$(\mathsf{H}^{-1}+ 0.5 {\rm Id})^{-1}$]{
\includegraphics[width=3.6cm]{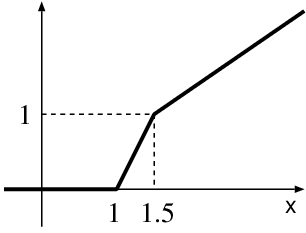}
}
\end{tabular}

\begin{tabular}{cc}
\subfigure[$\sprox_{(2/3)\widetilde{g}_1}$
($\eta=2/3$)]{
\includegraphics[width=3.6cm]{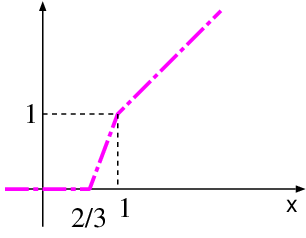}
}
 & 
\subfigure[$\prox_{4\widetilde{g}_1} = \prox_{4g_1}$
($\eta=4$)]{
\includegraphics[width=3.6cm]{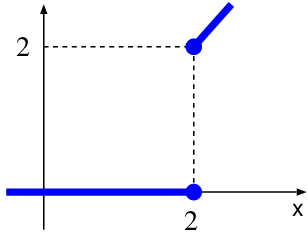}
}
\end{tabular}

% \subfigure[]{
% \includegraphics[width=8cm]{images/weakstrong_convexity.eps}
% }

  \caption{Graphs of 
(a) hard$_1$,
the proximity operators of
(b) $g_1=(1/2)\norm{\cdot}_0$,
(c) $\widetilde{g}_1$, and
(g) $(2/3)\widetilde{g}_1$,
(d)--(f) the intermediate operators
in conversion from (c) to (g), and
(h) the proximity operator of
$4\widetilde{g}_1$.
%(b) The weak/strong convexity of these functions.
}
 \label{fig:shrinkage}
 \end{figure}

%\newpage

\subsection{eROWL Shrinkage:
Continuous Relaxation of 
ROWL Shrinkage Operator}
\label{subsec:ordered_weighted_L1}

We have seen that 
the discontinuous hard shrinkage operator is converted to
the continuous firm shrinkage operator via the transformation
from (i) to (vi) in Fig.~\ref{fig:shrinkage}(a).
We mimic this procedure for another discontinuous operator.

We consider the Euclidean case $\euclidspace:=\real^N$
for $N\geq 2$.
Let $\signal{w}\in\real_+^N$ be the weight vector
such that $0\leq w_1\leq w_2\leq \cdots \leq w_N$.
Given $\signal{x}\in\real^N$, we define
$\abs{\signal{x}}\in \real_+^N$ of which the $i$th component is given by
$\abs{x_i}$.
Let $\abs{\signal{x}}_{\downarrow} \in \real_{+
\downarrow}^N=\{\signal{x}\in \real_+^N\mid x_1\geq x_2\geq \cdots \geq x_N\}$ denote
a sorted version of $\abs{\signal{x}}$ in the nonincreasing order;
i.e., 
$\big[\abs{\signal{x}}_{\downarrow}\big]_1\geq
\big[\abs{\signal{x}}_{\downarrow}\big]_2\geq 
\cdots\geq 
\big[\abs{\signal{x}}_{\downarrow}\big]_N$.
The reversely ordered weighted $\ell_1$ (ROWL) 
penalty \cite{sasaki24} is defined by
$\Omega_{\signal{w}}(\signal{x}) := \signal{w}^\top
\abs{\signal{x}}_{\downarrow}$.
The penalty $\Omega_{\signal{w}}$ is nonconvex and thus not a norm.\footnote{If $w_1\geq w_2\geq \cdots \geq w_N$,
the function $\signal{x}\mapsto \signal{w}^\top
\abs{\signal{x}}_{\downarrow}$ 
is convex, and it is called
the ordered weighted $\ell_1$ (OWL) norm \cite{zeng15}.
}

In this case, the associated proximity operator will
be discontinuous. 
This implies in light of Fact \ref{fact:weaklyconvex_necsuffcondition} that 
$\Omega_{\signal{w}}$ is not even {\em weakly convex}.
To see this, let us consider the case of $N=2$, and let $(0\leq )w_1< w_2$.
In this case, 
by symmetry, 
the proximity operator is given by
$ \prox_{\Omega_{\signal{w}}} (\signal{x}) = 
{\rm sgn}(\signal{x}) \odot \prox_{\Omega_{\signal{w}}}
(\abs{\signal{x}}),
~\signal{x}\in\real^2$,
% \begin{equation}
%  \prox_{\Omega_{\signal{w}}} (\signal{x}) = 
% {\rm sgn}(\signal{x}) \odot \prox_{\Omega_{\signal{w}}}
% (\abs{\signal{x}}),
% ~\signal{x}\in\real^2,
% \end{equation}
where ${\rm sgn}(\cdot)$ is the componentwise signum function,
$\odot$ denotes the Hadamard (componentwise) product,
and for $\signal{x}\in\real_+^2$
\begin{equation}
\hspace*{-.3em} \prox_{\Omega_{\signal{w}}} (\signal{x}) \!=\!
\begin{cases}
\!\{ (\signal{x} -
 \signal{w} )_+\}, & 
\hspace*{-.5em}\mbox{if } \!x_1 \!>\! x_2,  \\
\!\{ (\signal{x} - \signal{w} )_+ ,(\signal{x} - \signal{w}_{\downarrow} )_+ \}, & 
\hspace*{-.5em}\mbox{if } \!x_1 \!=\! x_2,  \\
\!\{ (\signal{x} - \signal{w}_{\downarrow} )_+\}, & 
\hspace*{-.5em} \mbox{if } \!x_1 \!<\! x_2.  \\
\end{cases}
\label{eq:prox_rowl}
\end{equation}
Here,
$\signal{w}_{\downarrow}:=[w_2, w_1]^\top$,
and $(\cdot)_+:\real^2\rightarrow \real^2:[y_1,y_2]^\top
\mapsto [\max\{y_1,0\}, \max\{y_2,0\}]^\top$ is the `ramp' function.
A selection
$R_{\rm ROWL}:\real^2\rightarrow \real^2:
\signal{x}\mapsto \signal{y}\in\prox_{\Omega_{\signal{w}}} (\signal{x})$
of $\prox_{\Omega_{\signal{w}}}$ 
will be referred to as ROWL shrinkage.

Note that the set
$\{ (\signal{x} - \signal{w})_+,
(\signal{x} -\signal{w}_{\downarrow})_+ \}
\subset \real^2$ 
in \eqref{eq:prox_rowl} is discrete.
This is similar to the case of 
$\prox_{\norm{\cdot}_0}$ in \eqref{eq:prox_g}.
Thus, resembling the relation between $\prox_{g_\tau}$ and
$\prox_{\widetilde{g}_\tau}$ corresponding to (i) and (ii) of
Fig.~\ref{fig:shrinkage}(a), respectively,
we replace the discrete set to its closed convex hull\footnote{
For a set $\{\signal{a},\signal{b}\}\subset\real^2$, its closed convex
hull is given by $\{\alpha\signal{a} + (1-\alpha)\signal{b}\mid
\alpha\in [0,1] \}$.
}
$\overline{{\rm conv}}
\{ (\signal{x} - \signal{w})_+,
(\signal{x} -\signal{w}_{\downarrow})_+ \}$.
This replacement yields the set-valued operator
$\mathsf{R}: \real^2\rightarrow 2^{\real^2}:
\signal{x} \mapsto
\{{\rm sgn}(\signal{x}) \odot \signal{y} \mid
\signal{y}\in \mathsf{R}(\abs{\signal{x}}) \}$,
% \begin{equation}
% T: \real^2\rightarrow 2^{\real^2}:
% \signal{x} \mapsto
% {\rm sgn}(\signal{x}) \odot T(\abs{\signal{x}}),
% \end{equation}
where for $\signal{x}\in\real_+^2$
\begin{equation}
\hspace*{-1em} \mathsf{R} (\signal{x}) \!=\!
\begin{cases}
\{ (\signal{x} -
 \signal{w} )_+\}, & 
\hspace*{-.5em}\mbox{if } \!x_1 \!>\! x_2,  \\
\overline{{\rm conv}}\{ (\signal{x} - \signal{w} )_+ ,(\signal{x} - \signal{w}_{\downarrow} )_+ \}, & 
\hspace*{-.5em}\mbox{if } \!x_1 \!=\! x_2,  \\
\{ (\signal{x} - \signal{w}_{\downarrow} )_+\}, & 
\hspace*{-.5em} \mbox{if } \!x_1 \!<\! x_2.  \\
\end{cases}
\label{eq:prox_rowl2}
\end{equation}

As expected, it can be shown that
$\mathsf{R} = \prox_{\widetilde{\Omega}_{\signal{w}}}$ for 
the 1-weakly convex function\footnote{One may add $\signal{w}^\top
\signal{w}/4$ to $\widetilde{\Omega}_\signal{w}$ to make the minimum value
be zero.}
(see Figs.~\ref{fig:rowl_cont} and \ref{fig:rowl_cont_w10})
\begin{align*}
\widetilde{\Omega}_{\signal{w}}&(\signal{x}):=
(\Omega_{\signal{w}}+ (1/2) \norm{\cdot}^2)^{**}(\signal{x}) - (1/2) \norm{\signal{x}}^2
\\
=&
\begin{cases}
%\langle \abs{\signal{x}}_{\downarrow},\signal{w} \rangle & 
\! \signal{w}^\top \! \abs{\signal{x}}_{\downarrow}, & 
\!\!\mbox{if }\!
 \abs{\signal{x}}_{\downarrow} \in \mathcal{K}_1,\\
%
%\langle \abs{\signal{x}}_{\downarrow}, \signal{w} \rangle
\!\signal{w}^\top \! \abs{\signal{x}}_{\downarrow} 
-\frac{1}{2}
(\abs{\signal{x}}_{\downarrow} \!+\! \signal{w})^\top 
   \signal{B} 
(\abs{\signal{x}}_{\downarrow} \!+\! \signal{w}), 
&
\!\! \mbox{if }\!
 \abs{\signal{x}}_{\downarrow}\in \mathcal{K}_2,\\
w_1\signal{1}^\top \abs{\signal{x}} + \frac{1}{2}\abs{\signal{x}}^\top
 \signal{C}\abs{\signal{x}},
&
\!\! \mbox{if }\!
 \abs{\signal{x}}_{\downarrow}\in \mathcal{K}_3,
\end{cases}
\end{align*}
where
$\signal{B}:= \dfrac{1}{2}\left[
\!
\begin{tabular}{rr}
\! $1$\!\! & $-1$\\
\!$-1$\!\! & $1$
\end{tabular}
\!
\right]
=
\signal{V}
\left[\!
\begin{tabular}{rr}
 $1$\!\! & $0$ \\
$0$\!\! & $0$
\end{tabular}
\!
\right]
\signal{V}^\top
$
with
$\signal{V}:=\dfrac{1}{\sqrt{2}}
\left[
\!
\begin{tabular}{rr}
 $1$ & $1$ \\
$-1$ & $1$
\end{tabular}
\!
\right]
$,
%%%
$\signal{C}:= \left[
\!
\begin{tabular}{rr}
\! $0$\!\! & $1$\\
\!$1$\!\! & $0$
\end{tabular}
\!
\right]
=
\signal{V}^\top
\left[\!
\begin{tabular}{rr}
 $1$\!\! & $0$ \\
$0$\!\! & $-1$
\end{tabular}
\!
\right]
\signal{V}
$,
$\mathcal{K}_1:=\{\signal{x}\in\real_{+\downarrow}^2\mid x_1 \geq w_2 -
w_1 + x_2\}$,
$\mathcal{K}_2:= \real_{+\downarrow}^2 \setminus \mathcal{K}_1
= \{\signal{x}\in\real_{+\downarrow}^2\mid 
w_2-w_1 - x_2\leq  x_1 < w_2 - w_1 +x_2\}$,
and
$\mathcal{K}_3:= \real_{+\downarrow}^2 \setminus \mathcal{K}_1
= \{\signal{x}\in\real_+^2\mid  x_1 < w_2-w_1 - x_2\}$.

 \begin{figure}[t!]
  % \psfrag{(x)}[Bc][Bc][1]{$\signal{x}$}
  % \psfrag{(x)}[Bc][Bc][1]{$T(\signal{x})$}
 \centering
\begin{tabular}{cc}
\subfigure[eROWL]{
\hspace*{-1em}\includegraphics[height=3.2cm]{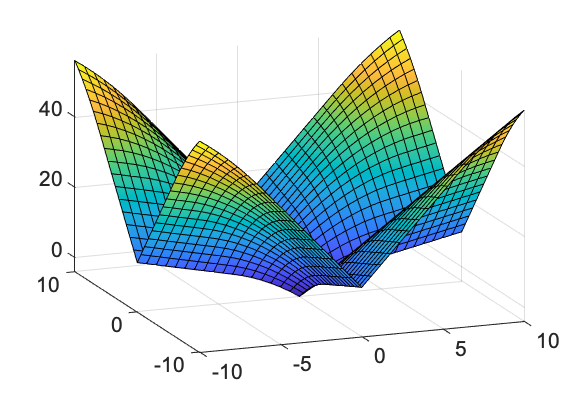}
}
 &
\subfigure[eROWL]{
\hspace*{-1em}\includegraphics[height=3cm]{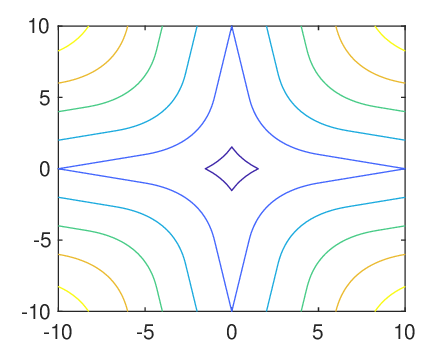}
}
 \\
\subfigure[ROWL]{
\hspace*{-1em}\includegraphics[height=3.2cm]{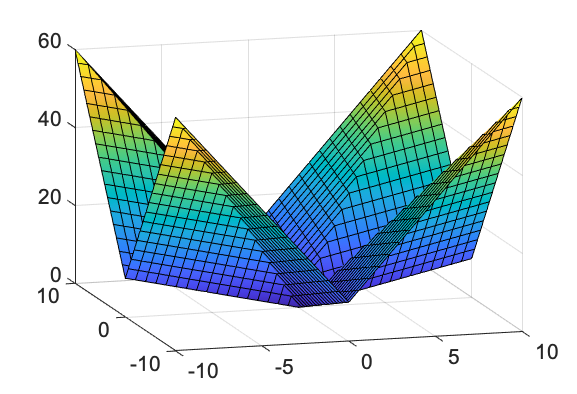}
}
 &
\subfigure[ROWL]{
\hspace*{-1em}\includegraphics[height=3cm]{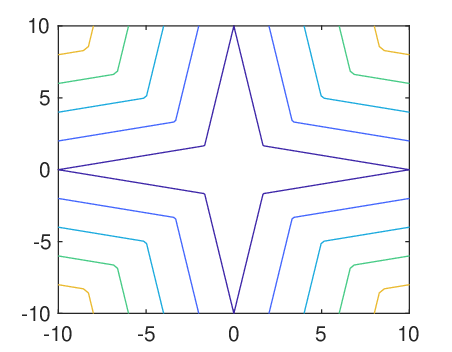}
}
\end{tabular}

  \caption{Surface and contours of eROWL and ROWL
  for the weight vector $\signal{w}=[1,5]^{\top}$.
}
 \label{fig:rowl_cont}
 \end{figure}

We now derive the extended ROWL (eROWL) shrinkage operator by
resembling the relation between
$\prox_{\widetilde{\norm{\cdot}}_0}$ 
and
$\sprox_{\widetilde{\norm{\cdot}}_0/(\delta+1)}$
corresponding to (ii) and (vi) of
Fig.~\ref{fig:shrinkage}, respectively.
The eROWL shrinkage operator for the relaxation parameter $\delta\in\real_{++}$
 is defined by
\begin{equation}
 R_{\delta} := \sprox_{\widetilde{\Omega}_{\signal{w}}/(\delta+1)},
\end{equation}
which is $(1+1/\delta)$-Lipschitz continuous
(see Theorem \ref{theorem:prox_conversion}).
It can readily be verified that
$\signal{p}:= R_{\delta} (\signal{x}) = \left[\mathsf{R}^{-1} + \delta \id \right]^{-1} ((\delta +1)\signal{x})
\Leftrightarrow
(\delta +1)\signal{x} \in
\mathsf{R}^{-1}(\signal{p}) + \delta \signal{p}$,
which gives the geometric interpretation shown in
Fig.~\ref{fig:operator_T}.
Using this, we can verify that
$R_{\delta} (\signal{x}) = 
{\rm sgn}(\signal{x}) \odot R_{\delta}(\abs{\signal{x}})$,
where, for $\signal{x}\in\real_+^2$, 
\begin{align}
&\hspace*{0em} R_{\delta}(\signal{x}) =
\left\{
\begin{array}{l}
\big(\signal{x} - \frac{1}{\delta+1} \signal{w} \big)_+,
 ~\mbox{if }\! x_1 \geq x_2, \signal{x}\not\in
\mathcal{C}_1\cup \mathcal{C}_2
\\			
\big(\signal{x} - 
\frac{1}{\delta+1}
\signal{w}_{\downarrow}
\big)_+,
 ~\mbox{if } 
x_1 < x_2, \signal{x}\not\in
\mathcal{C}_1\cup \mathcal{C}_2,
\\
\!\!\left[
\begin{array}{c}
m-w_1\\
0
\end{array}
\right]
+
\dfrac{(\delta+1)x_2 - m}{\delta}
\left[
\begin{array}{c}
\!-1 \!\\
1
\end{array}
\right]\!,
% \dfrac{1}{\delta}
% \left[
% \begin{array}{c}
% -(\delta+1)x_2 + m + \delta (m-w_1) \\
% (\delta+1)x_2 - m
% \end{array}
% \right],
~\mbox{if } \signal{x}\in \mathcal{C}_1,
\\
\signal{x} - 
\frac{1}{\delta+1}\signal{w}_{\alpha(\signal{x})},
~\mbox{if } \signal{x}\in \mathcal{C}_2.
\end{array}
\right.
\label{eq:eROWL}
\end{align}
%%%%%
Here,
$m:=[(\delta+1)(x_1+x_2) + \delta w_1]/(\delta+2)\in\real_{+}$,
$\signal{w}_{\alpha(\signal{x})}:=
\alpha(\signal{x}) \signal{w} +
(1-\alpha(\signal{x})) \signal{w}_{\downarrow}\in\real_{+}^2$
for
$\alpha(\signal{x}) := 1/2 + (\delta+1)(x_1-x_2)/(2\delta(w_2-w_1))\in
(0,1)$,
\begin{equation}
 \mathcal{C}_1:= 
\mathcal{H}_{\mbox{{\tiny $\backslash$}}}^-  \cap 
\mathcal{H}_{\mbox{{\tiny $\slash$}} 1}^+ \cap 
\mathcal{H}_{\mbox{{\tiny $\slash$}} 2}^+
\end{equation}
 is a triangle given by the intersection of 
three halfspaces
\begin{align*}
\mathcal{H}_{\mbox{{\tiny $\backslash$}}}^- :=&~\{\signal{x}\in\real^2\mid x_1\!+\!x_2 \leq
(w_1\!+\!w_2)/(\delta \!+\! 1) \!+\! w_2 \!-\! w_1 \},\\
\mathcal{H}_{\mbox{{\tiny $\slash$}} 1}^+ :=&~
\{\signal{x}\in\real^2\mid -x_1 + (\delta +1)x_2 > \delta w_1/(\delta+1)\},\\
\mathcal{H}_{\mbox{{\tiny $\slash$}} 2}^+ :=&~
\{\signal{x}\in\real^2\mid  (\delta +1)x_1 -x_2 > \delta
w_1/(\delta+1)\},
\end{align*}
and
\begin{equation}
 \mathcal{C}_2:= \mathcal{H}_{\mbox{{\tiny $\backslash$}}}^+ \cap \mathcal{S}
\end{equation}
is an unbounded set given by the intersection of
the hyperslab and the halfspace
\begin{align}
\hspace*{-.5em}\mathcal{S} :=&~ \{\signal{x}\in\real^2\mid \abs{x_1-x_2} <
 \delta(w_2-w_1)/(\delta+1) \}
\label{eq:setS}\\
\hspace*{-.4em} \mathcal{H}_{\mbox{{\tiny $\backslash$}}}^+ \!:=&~\{\signal{x}\in\real^2\mid x_1 \!+\! x_2 >
(w_1 \!+\! w_2)/(\delta+1) + w_2-w_1 \} \nonumber\\
\hspace*{-.5em}=&~ \real^2\setminus
\mathcal{H}_{\mbox{{\tiny $\backslash$}}}^-.
\label{eq:setHbackslash+}
\end{align}
%he derivation is detailed in Appendix \ref{appendix:derivation}.

 \begin{figure}[t!]
  % \psfrag{(x)}[Bc][Bc][1]{$\signal{x}$}
  % \psfrag{(x)}[Bc][Bc][1]{$T(\signal{x})$}
 \centering
\begin{tabular}{cc}
\subfigure[eROWL]{
\hspace*{-1em}\includegraphics[height=3.2cm]{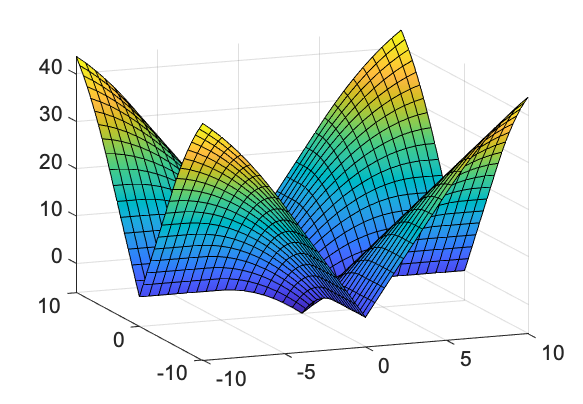}
}
 &
\subfigure[eROWL]{
\hspace*{-1em}\includegraphics[height=3cm]{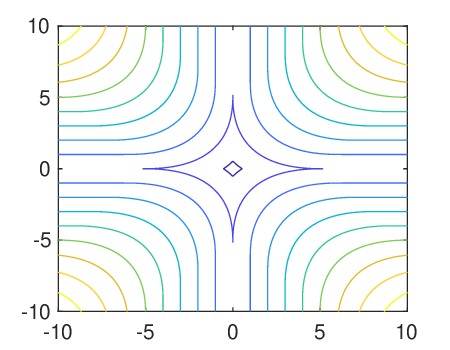}
}
 \\
\subfigure[ROWL]{
\hspace*{-1em}\includegraphics[height=3.2cm]{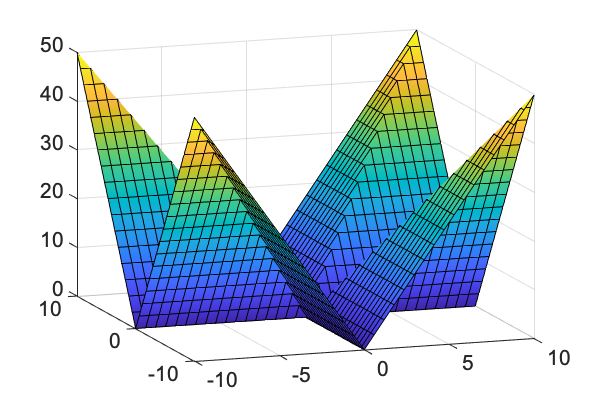}
}
 &
\subfigure[ROWL]{
\hspace*{-1em}\includegraphics[height=3cm]{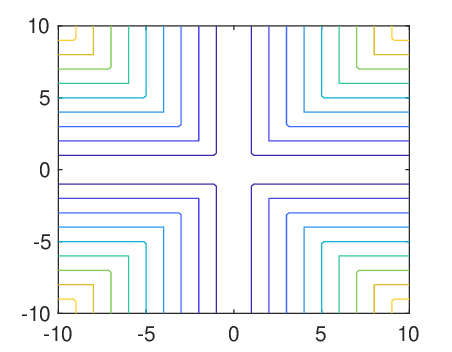}
}
\end{tabular}

  \caption{Surface and contours of eROWL and ROWL
  for the weight vector $\signal{w}=[0,5]^{\top}$.
}
 \label{fig:rowl_cont_w10}
 \end{figure}

For every $\signal{x}\in\real^2$,
it holds that $\lim_{\delta\downarrow 0} R_{\delta}(\signal{x})\in
\mathsf{R}(\signal{x})$,
where 
$\lim_{\delta\downarrow 0} R_{\delta}(\signal{x}) = \signal{x} -
\signal{w}_{1/2}$
over $\{\signal{x}\in\real_+^2\mid x_1=x_2 \}$.
%for every $\signal{x}:=[x,x]^{\top}\in\real_+^2$.
%
An arbitrary selection $U:\real^2\rightarrow \real^2$ of 
the set-valued operator
$\mathsf{R}(= \prox_{\widetilde{\Omega}_{\signal{w}}})$
jointly satisfies (i) range $U\neq \real^2$ and
(ii) $\widetilde{\Omega}_{\signal{w}} +
(1/2)\norm{\cdot}^2\in\Gamma_0(\real^2)$.
This gives a counterexample where $2) \not\Rightarrow 1)$ in Proposition 
\ref{proposition:surjectivity_and_continuityweakconvexity}.
(The same applies to a selection of $\prox_{\widetilde{\norm{\cdot}_0}}$.)
On the other hand, it holds that range $R_{\delta}=\real^2$,
as consistent with Proposition
\ref{proposition:surjectivity_and_continuityweakconvexity}.
The operator $R_{\delta}$ is a MoL-Grad denoiser;
this is a direct consequence of Theorem \ref{theorem:prox_conversion} and 
Fact \ref{fact:weaklyconvex_necsuffcondition}.
%%%
\begin{corollary}
\label{corollary:erowl_mol}
For every $\delta\in\real_{++}$,
$R_{\delta} = \sprox_{\widetilde{\Omega}_{\signal{w}}/(\delta+1)}$
 can be expressed as the 
($1+1/\delta$)-Lipschitz continuous gradient
of a differentiable convex function.
\end{corollary}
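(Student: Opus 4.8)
The plan is to read the statement off directly from Fact~\ref{fact:weaklyconvex_necsuffcondition}, using the s-prox representation of $R_{\delta}$ already supplied by Theorem~\ref{theorem:prox_conversion}. By construction, $\widetilde{\Omega}_{\signal{w}} + (1/2)\norm{\cdot}^2 = (\Omega_{\signal{w}}+(1/2)\norm{\cdot}^2)^{**}\in\Gamma_0(\real^2)$, so $\widetilde{\Omega}_{\signal{w}}$ is proper, l.s.c., and $1$-weakly convex. Dividing by $\delta+1>1$ preserves properness and lower semicontinuity and scales the weak-convexity modulus, giving
\begin{equation}
\frac{\widetilde{\Omega}_{\signal{w}}}{\delta+1} + \frac{1}{2(\delta+1)}\norm{\cdot}^2
= \frac{1}{\delta+1}\Big(\widetilde{\Omega}_{\signal{w}} + \frac{1}{2}\norm{\cdot}^2\Big)\in\Gamma_0(\real^2).
\end{equation}
In particular the index-$1$ proximity operator of $\widetilde{\Omega}_{\signal{w}}/(\delta+1)$ is single-valued (cf.~Corollary~\ref{corollary:prox_weak_convexity}(a) with $\varrho=(\delta+1)^{-1}<1$), consistent with the notation $R_{\delta}=\sprox_{\widetilde{\Omega}_{\signal{w}}/(\delta+1)}$ used in Theorem~\ref{theorem:prox_conversion}.

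Next I would match parameters with condition (C1) of Fact~\ref{fact:weaklyconvex_necsuffcondition}. Setting $\varphi:=\widetilde{\Omega}_{\signal{w}}/(\delta+1)$ and choosing $\beta:=\delta/(\delta+1)$, we have $\beta\in(0,1)$ since $\delta\in\real_{++}$, and $1-\beta=(\delta+1)^{-1}$, so the displayed inclusion reads exactly $\varphi+((1-\beta)/2)\norm{\cdot}^2\in\Gamma_0(\real^2)$; that is, (C1) holds for $T:=R_{\delta}$. The equivalent condition (C2) then asserts that $R_{\delta}$ is a $\beta^{-1}$-Lipschitz continuous gradient of a Fr\'echet-differentiable convex function $\psi\in\Gamma_0(\real^2)$, and $\beta^{-1}=(\delta+1)/\delta=1+1/\delta$, which is the claim. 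For completeness, the last part of Fact~\ref{fact:weaklyconvex_necsuffcondition} also identifies the potential as $\psi=(\varphi+(1/2)\norm{\cdot}^2)^*$.

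I do not expect a genuine obstacle here: the corollary is a direct specialization of Fact~\ref{fact:weaklyconvex_necsuffcondition} to the weakly convex function produced by Theorem~\ref{theorem:prox_conversion}. The only point demanding care is the bookkeeping of the weak-convexity modulus under the scaling by $1/(\delta+1)$ --- one must verify that the resulting modulus $(\delta+1)^{-1}$ is \emph{strictly} less than $1$ so that $\beta$ falls strictly inside $(0,1)$ (the degenerate value $\delta=0$ corresponds precisely to the discontinuous, non-Lipschitz limit discussed in Section~\ref{sec:application}-\ref{subsec:l0_mc}), and that it lands exactly at $1-\beta$ so that the Lipschitz constant comes out as $1+1/\delta$ rather than merely some finite constant.
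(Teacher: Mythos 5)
Your argument is correct and is exactly the route the paper takes: it states the corollary as a direct consequence of Theorem~\ref{theorem:prox_conversion} and Fact~\ref{fact:weaklyconvex_necsuffcondition}, and your proposal simply spells out the parameter matching $\varphi=\widetilde{\Omega}_{\signal{w}}/(\delta+1)$, $\beta=\delta/(\delta+1)$, $\beta^{-1}=1+1/\delta$ that the paper leaves implicit. No gaps.
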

%%%

 \begin{figure}[t!]
   \psfrag{M}[Bc][Bc][1]{$\mathcal{M}:={\rm span}\{[1,1]^\top\}$}
   \psfrag{Tm}[Bc][Bc][1]{$\mathsf{R}(\signal{m})$}
%   \psfrag{m}[Bl][Bl][1]{$\signal{m}=T^{-1}(\signal{p})= T^{-1}(\hat{\signal{p}})$}
   \psfrag{m}[Bl][Bl][1]{$\signal{m}=\mathsf{R}^{-1}(\signal{p})$}
   \psfrag{=}[Bl][Bl][1]{$= \mathsf{R}^{-1}(\hat{\signal{p}})$}
   \psfrag{x}[Br][Br][1]{$\signal{x}$}
   \psfrag{x1}[Bl][Bl][1]{$x_1$}
   \psfrag{x2}[Br][Br][1]{$x_2$}
   \psfrag{p}[Bl][Bl][1]{\hspace*{-4em}$\signal{p} = R_{\delta}(\signal{x})$}
   \psfrag{xt}[Br][Br][1]{$\hat{\signal{x}}$}
   \psfrag{pt}[Br][Br][1]{$\hat{\signal{p}}$}
   \psfrag{pt}[Br][Br][1]{$\hat{\signal{p}} = R_{\delta}(\hat{\signal{x}})$}
   \psfrag{distrho}[Bl][Bl][.8]{$\dfrac{(w_1+w_2)\delta}{\sqrt{2}(\delta+1)}$}
   \psfrag{dist1}[Bl][Bl][.8]{$\dfrac{w_1+w_2}{\sqrt{2}(\delta+1)}$}
   \psfrag{dist2rho}[Br][Br][.8]{$\dfrac{(w_2- w_1)\delta}{\sqrt{2}(\delta+1)}$}
   \psfrag{dist21}[Br][Br][.8]{$\dfrac{w_2 - w_1}{\sqrt{2}(\delta+1)}$}
   \psfrag{w2}[Br][Br][1]{$w_2-w_1$}
   \psfrag{w2delta}[Br][Br][1]{$\frac{(w_2-w_1)\delta}{\delta+1}$}
   % \psfrag{distrho}[Bl][Bl][.8]{$\dfrac{(w_1+w_2)}{\sqrt{2}}\dfrac{\delta}{\delta+1}$}
   % \psfrag{dist1}[Bl][Bl][.8]{$\dfrac{(w_1+w_2)}{\sqrt{2}}\dfrac{1}{\delta+1}$}
   % \psfrag{dist2rho}[Br][Br][.8]{$\dfrac{(w_2- w_1)\delta}{\sqrt{2}(\delta+1)}$}
   % \psfrag{dist21}[Br][Br][.8]{$\dfrac{(w_2 - w_1)}{\sqrt{2}}\dfrac{1}{\delta+1}$}

 \centering
\hspace*{.4cm}\includegraphics[height=5cm]{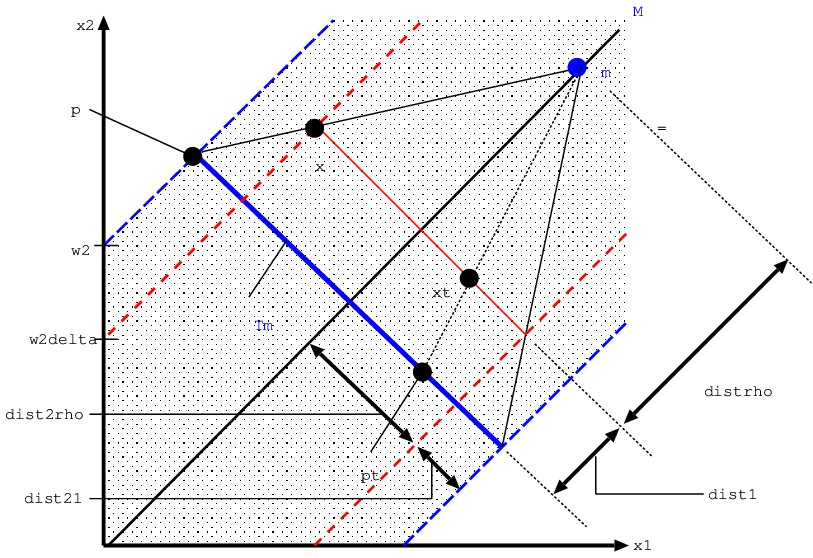}

  \caption{A geometric interpretation of conversion from $\mathsf{R}$ to
  $R_{\delta}$.}
\vspace*{-.5em}
 \label{fig:operator_T}
 \end{figure}

%\clearpage

Figure \ref{fig:rowlshrinkage} illustrates 
the ROWL and eROWL shrinkage operators, 
where Figs.~\ref{fig:rowlshrinkage}(a) and \ref{fig:rowlshrinkage}(d)
depict ROWL shrinkage and eROWL shrinkage as a vector field, respectively,
while
Figs.~\ref{fig:rowlshrinkage}(b), \ref{fig:rowlshrinkage}(c),
\ref{fig:rowlshrinkage}(e), and \ref{fig:rowlshrinkage}(f)
depict (a part of) each graph of those operators.
More specifically, 
Figs.~\ref{fig:rowlshrinkage}(b) and \ref{fig:rowlshrinkage}(e) illustrate
the graphs of $R_{\rm ROWL}$ and $R_{\delta}$, respectively, 
on the two dimensional plane, where
the solid lines depict the images of line segments (dot-dashed line).
On the other hand, 
Figs.~\ref{fig:rowlshrinkage}(c) and \ref{fig:rowlshrinkage}(f)
depict the graphs of the first components of 
$R_{\rm ROWL}$ and $R_{\delta}$, respectively.
In Figs.~\ref{fig:rowlshrinkage}(a), \ref{fig:rowlshrinkage}(b),
\ref{fig:rowlshrinkage}(d), and \ref{fig:rowlshrinkage}(e),
the dotted line on the diagonal is the set of $\signal{x}$'s with
$x_1=x_2$.

As seen from 
Figs.~\ref{fig:rowlshrinkage}(a) -- \ref{fig:rowlshrinkage}(c), 
ROWL shrinkage shows discontinuity on the diagonal.
In Fig.~\ref{fig:rowlshrinkage}(a), there exist only two types of flow
basically (excluding the neighborhood of the axes),
and all flows are identical in each side of the diagonal.
To see the behaviour around the diagonal, let us turn attention to
Fig.~\ref{fig:rowlshrinkage}(b), where the graphs 
${\rm gra}R_{\rm ROWL}:=
\{(\signal{x}, R_{\rm ROWL}(\signal{x})) \mid
\signal{x}\in\real^2\} \subset \real^2\times \real^2$
are depicted on $\real^2$.
Here, $R_{\rm ROWL}(\signal{x})$'s corresponding to 
those $\signal{x}$'s on dot-dashed line are plotted by solid line;
circle on the solid line corresponds to 
each square on the dot-dashed line.
It can be seen that $R_{\rm ROWL}(\signal{x})$ jumps from a blue point
on the line to (the vicinity of) a red open circle on the doted line
when $\signal{x}$ crosses the diagonal, and thus 
each dot-dashed line is mapped to 
two separate segments (depicted by the solid lines with circles on them).
This illustrates the discontinuity of ROWL shrinkage,
which typically causes difficulty in analyzing
convergence when the operator is used in the splitting algorithms.
The discontinuity of ROWL shrinkage can also be seen
from Fig.~\ref{fig:rowlshrinkage}(c).

In stark contrast, eROWL shrinkage is continuous,
as illustrated in Figs.~\ref{fig:rowlshrinkage}(d) -- \ref{fig:rowlshrinkage}(f).
In Figs.~\ref{fig:rowlshrinkage}(d), 
the flow in the neighbor of the diagonal
(specifically, in the region between two doted lines parallel to the
diagonal) changes continuously as $\signal{x}$ approaches the diagonal.
As $\signal{x}$ becomes closer to the diagonal, 
the flow becomes more parallel to the diagonal line
so that it is exactly parallel on the diagonal.
The continuous change is described  better in
Figs.~\ref{fig:rowlshrinkage}(e) and \ref{fig:rowlshrinkage}(f);
in Fig.~\ref{fig:rowlshrinkage}(e),
 each dot-dashed line is mapped to a continuous polygonal line
(a piecewise-linear curve).
The continuity is the remarkable difference (from ROWL) of great importance.

%\newpage

 \begin{figure*}
   \psfrag{When x crosses the diagonal...}[Bc][Bc][.8]{\hspace*{5em}When $\signal{x}$ crosses the diagonal...}
   \psfrag{Rw(x) jumps discontinuously.}[Bc][Bc][.8]{\hspace*{-2em}$R_{\rm ROWL}(\signal{x})$ jumps discontinuously.}
   \psfrag{Rdelta(x) changes continuously.}[Bc][Bc][.8]{\hspace*{-3em}$R_{\delta}(\signal{x})$ changes
  continuously.}
   \psfrag{x1}[Bc][Bc][.8]{$x_1$}
   \psfrag{x2}[Bc][Bc][.8]{$x_2$}
 \centering

\hspace*{-1em}
\begin{tabular}{ccc}
\subfigure[ROWL: vector field]{
\includegraphics[height=4cm]{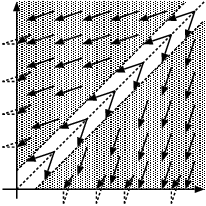}
}
 &\hspace*{1em}
\subfigure[ROWL: a part of the graph]{
\includegraphics[height=4cm]{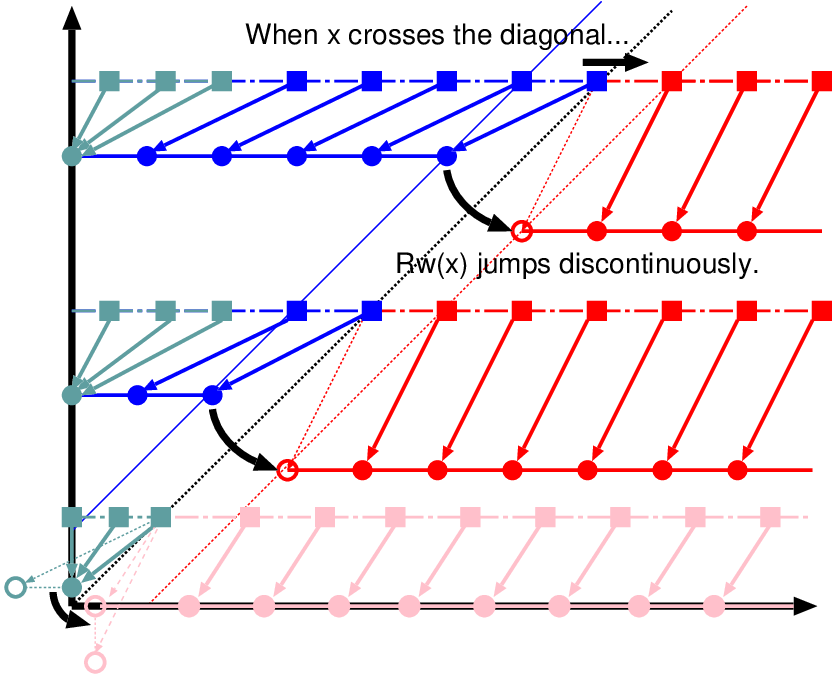}
}

 &
\subfigure[ROWL: graph of $\signal{x}\mapsto (R_{\rm ROWL}(\signal{x}))_1$]{
\includegraphics[height=4cm]{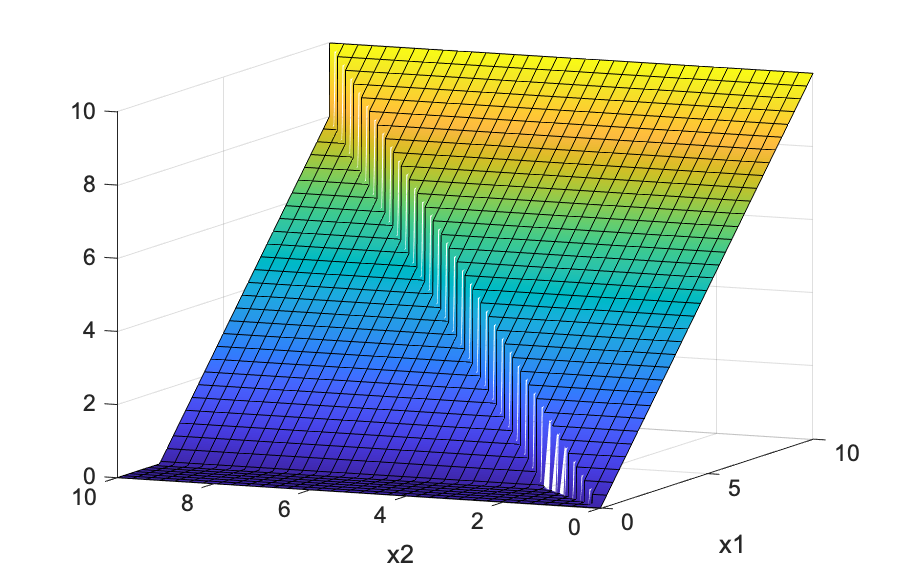}
}

\end{tabular}

\begin{tabular}{ccc}
\subfigure[eROWL vector field]{
\includegraphics[height=4cm]{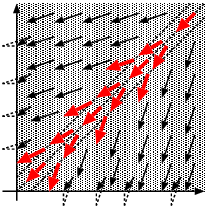}
}
 &
\subfigure[eROWL: a part of the graph]{
\includegraphics[height=4cm]{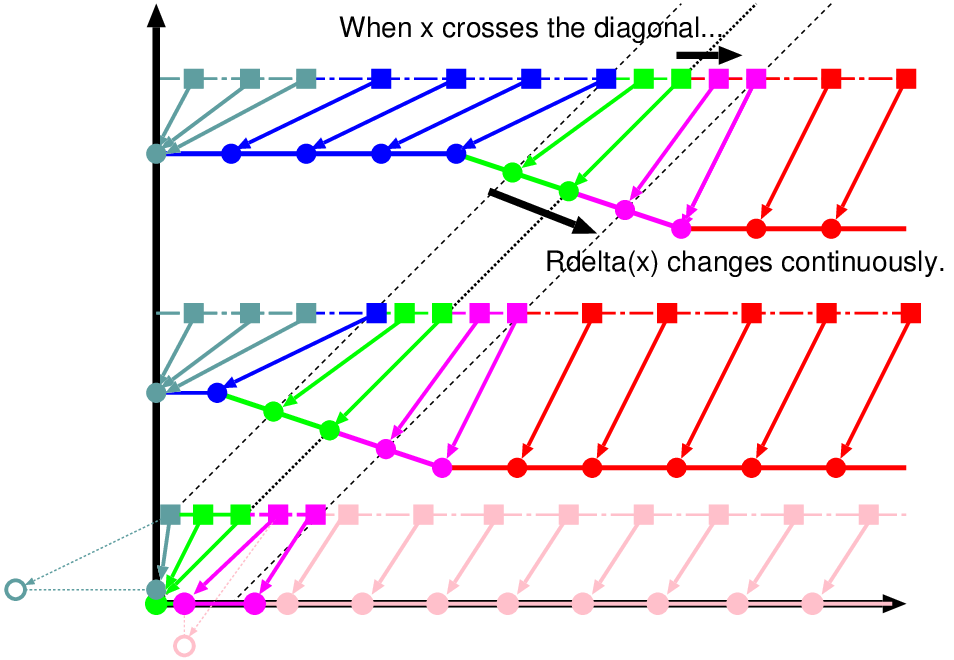}
}

 &
\subfigure[eROWL: graph of $\signal{x}\mapsto (R_{\delta}(\signal{x}))_1$]{
\includegraphics[height=4cm]{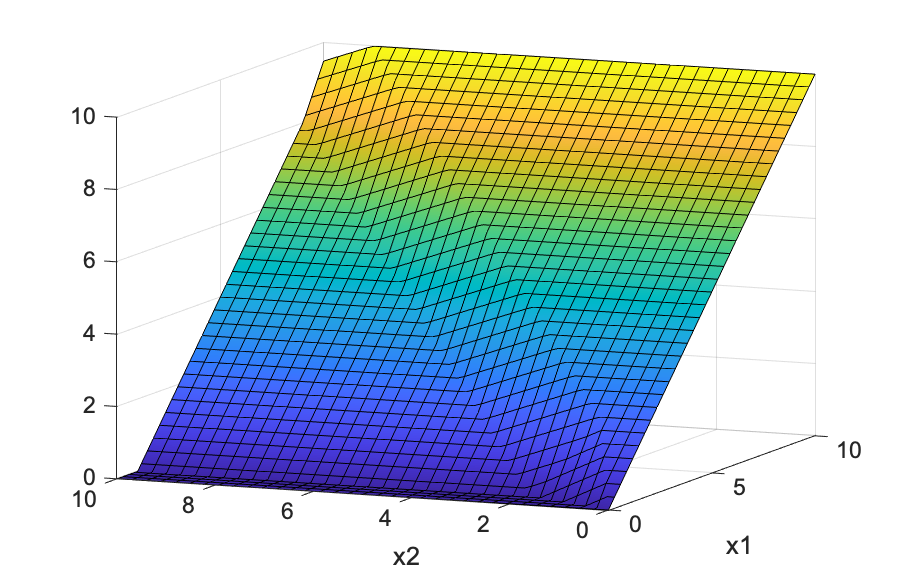}
}

\end{tabular}

\caption{Illustrations of ROWL and eROWL shrinkage operators as vector
 field and as graph. 
In (a) and (d),
the shaded region indicates the range of operator.
In (b) and (e), square ($\signal{x}$) on the dot-dashed lines
is mapped to circle
($R_{\rm ROWL}(\signal{x})$ or $R_{\delta}(\signal{x})$) on
the solid lines which therefore represent (a part of) the graphs of the operators.
In (c) and (f), the graphs of the functions
$\signal{x}\mapsto (R_{\rm ROWL}(\signal{x}))_1$ and 
$\signal{x}\mapsto (R_{\delta}(\signal{x}))_1$ 
returning the first component
are depicted, respectively, for $\signal{w}:=[0,2]^{\top}$ and $\delta:=1$.
}
 \label{fig:rowlshrinkage}
 \end{figure*}

%\clearpage

\begin{remark}
\label{remark:weights}
The eROWL shrinkage operator $R_{\delta}$
given in \eqref{eq:eROWL}
can be represented with another set of parameters
$\widetilde{\signal{w}}_{\rm eROWL}:=\signal{w}/(\delta+1)$ and
$\varpi:=(w_2-w_1)\delta/(\delta+1)$,
in place of $\signal{w}$ and $\delta$.
Comparing \eqref{eq:prox_rowl2} and \eqref{eq:eROWL}
under this representation, one can see that
$\widetilde{\signal{w}}_{\rm eROWL}$ of
$R_{\delta}$ (eROWL) corresponds to 
$\signal{w}_{\rm ROWL}$ (the weight vector $\signal{w}$ of
$\mathsf{R}$), and 
it will therefore be fair to set the weight vectors
in a way that $\widetilde{\signal{w}}_{\rm eROWL}=\signal{w}_{\rm ROWL}$.
The parameter $\varpi$ gives the bound of $\abs{x_1-x_2}$ in the
definition of 
$\mathcal{S}$ given in \eqref{eq:setS}. 
\end{remark}

\begin{remark}[Relation to WL1L0 regularization]
\label{remark:prior_work}
Recently, the joint weighted $\ell_1$ and $\ell_0$-norm (WL1L0) regularization
{\rm \cite{berkessa24}} has been proposed:
\begin{equation}
 \min_{\signal{c},\signal{d}\in\real^n} \norm{\signal{y}- \signal{A}
  (\signal{c}+ \signal{d})}_2^2 + 
\lambda(
\alpha \norm{\signal{c}}_1 + (1-\alpha)\norm{\signal{d}}_0
),
\label{eq:wl1l0}
\end{equation}
where $\lambda\in\real_{++}$ and $\alpha\in(0,1)$.
Here, $\signal{y} = \signal{A}\signal{x}_{\diamond} +\signal{\epsilon}$
is the output vector for a given matrix $\signal{A}$,
the estimand $\signal{x}_{\diamond}$, and the noise vector $\signal{\epsilon}$.
As the WL1L0 regularization is seemingly related
to ROWL shrinkage, let us inspect \eqref{eq:wl1l0} by slightly
 reformulating it as follows:
\begin{align}
& \min_{\signal{x},\signal{c},\signal{d}\in\real^n} \norm{\signal{y}- \signal{A}
  \signal{x}}_2^2 + 
\lambda(
\alpha \norm{\signal{c}}_1 + (1-\alpha)\norm{\signal{d}}_0
)\nonumber\\
& \mbox{s.t.}~\signal{x} = \signal{c}+\signal{d}.
\label{eq:wl1l0_reformulation}
\end{align}
Assume that 
\eqref{eq:wl1l0_reformulation} has a solution
$(\signal{x}^*,\signal{c}^*,\signal{d}^*)$.
Then, the $i$th components,
$i\in\{1,2,\cdots,n\}$,  of $\signal{c}^*$ and $\signal{d}^*$
are given by
$c_i^*=x_i^* 1(\abs{x_i^*} <(1-\alpha)/\alpha)$ and
$d_i^*=x_i^* 1(\abs{x_i^*} \geq (1-\alpha)/\alpha)$, 
respectively.
Hence, $\signal{x}^*$ can be characterized as a solution of the
following problem:
\begin{align}
& \min_{\signal{x}\in\real^n} \norm{\signal{y}- \signal{A}
  \signal{x}}_2^2 + 
\lambda \Omega_{{\rm Capped\mbox{-}}\ell_1}(\signal{x}),
\label{eq:cappedl1}
\end{align}
where $\Omega_{{\rm Capped\mbox{-}}\ell_1}(\signal{x}):=
\sum_{i=1}^{n}\phi_{{\rm Capped\mbox{-}}\ell_1}(x_i)$
with
$\phi_{{\rm Capped\mbox{-}}\ell_1}:
\real\rightarrow\real:x\mapsto \min\{\alpha\abs{x},1-\alpha\}$
which is the capped-$\ell_1$ penalty {\rm \cite{tzhang08}}
essentially.

The WL1L0 (capped-$\ell_1$) regularizer penalizes inactive components
by the $\ell_1$ norm, while it gives no penalty to active components
in the sense that an increase of those active components preserves 
the penalty.
This applies to the case of ROWL when the zero weights are assigned
to the top component(s) of $\abs{\signal{x}}_{\downarrow}$ and
some strictly-positive uniform weights are assigned to the others.
Note, however, that there is a significant difference
with respect to the way of identifying the active components.
Indeed, WL1L0 specifies the threshold $(1-\alpha)/\alpha$, and 
all such components having magnitudes above the threshold are identified
to be active.
In contrast, ROWL can specify the number of active components directly
via predetermining the number of zero weights.
This is advantageous in such scenarios when 
the number of active components is {\it a priori} known
(but their magnitudes are not) and/or when 
one wishes to extract a specific number of feature vectors.

In addition to the notable difference discussed in the previous
paragraph, the proposed eROWL shrinkage is a MoL-Grad denoiser 
(see Corollary \ref{corollary:erowl_mol}), which means that
plugging it into the splitting methods gives a promising algorithm
in the sense of generating a sequence converging to a solution of 
the  associated optimization problem.
See {\rm \cite{yukawa_molgrad24}} for more details.
This is a remarkable advantage because the algorithms for 
WL1L0 or capped-$\ell_1$ only have a guarantee of convergence to 
a stationary point which is not necessarily a global minimizer.
\end{remark}

\section{Numerical Examples: Sparse Signal Recovery}
\label{sec:simulation}

We consider the simple linear model
$\signal{y} := \signal{A} \signal{x}_{\diamond} + \signal{\epsilon}$,
where $\signal{x}_{\diamond} \in\real^N$ is the sparse (or weakly sparse) signal,
$\signal{A} \in\real^{M\times N}$ is the measurement matrix, and
$\signal{\epsilon}\in\real^M$ is the i.i.d.~Gaussian noise vector.
To recover the signal $\signal{x}_{\diamond}$,
we consider the iterative shrinkage algorithm in the following form:
\begin{equation}
 \signal{x}_{k+1}:= T\left(
 \signal{x}_k - \mu\nabla f( \signal{x}_k)
\right),~k\in\Natural,
\label{eq:pfbs}
\end{equation}
where 
$T$ is the shrinkage operator (hard, firm, soft, ROWL, or eROWL), and
$f:\real^N\rightarrow\real_+:\signal{x}\mapsto
\frac{1}{2}\norm{\signal{A} \signal{x} - \signal{y}}_2^2$
is the squared-error function with the step size parameter
$\mu\in\real_{++}$.
Clearly, the function $f$ is $\rho$-strongly convex with
$\kappa$-Lipschitz continuous gradient $\nabla f: \signal{x}\mapsto 
\signal{A}^\top (\signal{A}\signal{x}-\signal{y})$
for
$\rho := \lambda_{\min}(\signal{A}^\top \signal{A})$ and 
$\kappa := \lambda_{\max}(\signal{A}^\top \signal{A})$.
As an evaluation metric, we adopt the system mismatch
$\norm{\signal{x}_{\diamond} -\signal{x}_k}_2^2/
\norm{\signal{x}_{\diamond}}_2^2$.

\begin{remark}[Prior works of nonconvex penalties]
 There is a vast amount of literature on nonconvex penalties
for sparse signal recovery (or sparse modeling);
the examples include
the $\ell_p$ quasi-norm
{\rm \cite{chartrand07,marjanovic12,jeong14,yukawa16}},
log-sum function {\rm \cite{candes08}}, 
capped $\ell_1$ {\rm \cite{zhang09}},
MC {\rm \cite{zhang}},
smoothly clipped absolute deviation (SCAD) {\rm \cite{fan01}}, 
continuous exact $\ell_0$ (CEL0) {\rm \cite{soubies15}}, 
sorted concave penalty \cite{long19},
to name a few.
For most of those nonconvex penalties, 
the (possibly set-valued) proximity operators are listed 
in the prox repository\footnote{http://proximity-operator.net/nonconvexfunctions.html}.

Of closer relation to the present study is those based on the notion of
``convexity-preservation'' {\rm \cite{blake87,nikolova99}}, which means that
the penalty is nonconvex (precisely, weakly convex) but
the whole cost function remains convex owing to the strong
 convexity of the loss/fidelity function or other terms.
It is related to the classical notion of difference of convex (DC)
 programming {\rm \cite{dinh86,parekh16,lanza19}}.
In the recent works, this line of research is referred to as
convex-nonconvex strategy or the generalized Moreau enhanced model
{\rm \cite{selesnick,lanza21,abe_ip20}}.
Further developments/extension can be found in
{\rm \cite{alshabili21,yata22,yzhang23,yukawa23}}.

This idea has been exploited in robust signal recovery as well
{\rm \cite{suzuki20,suzuki23,yukawa23,tillmann24}}.
A systematic way of enhancing the proximity operator 
by taking an affine combination of two proximity operators with
positive- and negative-valued weights has been proposed
under the name of external division operator {\rm \cite{suzuki_icassp24}}.
In the present study, we focus on the efficacy of 
the continuous relaxation which is the main subject, and 
comparisons to the other nonconvex penalties are left
as future works.

\end{remark}

\subsection{Hard Shrinkage versus Firm Shrinkage}

We compare the performance of the ``discontinuous'' hard shrinkage operator
with its continuous relaxation which is firm shrinkage.
For comparison, we also test soft shrinkage.
The signal $\signal{x}_{\diamond} \in\real^N$ of dimension $N:=50$ is
generated as follows:
the first $s$ components are generated from
the i.i.d.~standard Gaussian distribution $\mathcal{N}(0,1)$, and
the other $N-s$ components are generated from
i.i.d.~$\mathcal{N}(0,1.0\times 10^{-4})$.
The matrix $\signal{A}$ is generated also from
$\mathcal{N}(0,1)$.
We study the impacts of the parameter $\tau_1$ of firm shrinkage and the threshold
$\tau$ of soft/hard shrinkage on the performance.
Although $\mu$ and $\tau_2$ can be tuned within the range given in
\cite[Theorem 2]{yukawa_molgrad24}, those parameters are set systematically to 
$\mu:= (2-\varepsilon)/(\kappa+\rho)$ and 
$\tau_2:= \tau_1/(\mu\rho)$ for $\varepsilon:=1.0\times 10^{-6}$
(see Appendix \ref{appendix:firm_parameter}).
The step size of soft/hard shrinkage is set to $1/\kappa\in(0,2/\kappa)$.
The results are averaged over 20,000 independent trials.

Figure \ref{fig:simulation1} depicts the results
for three different sparsity levels $s:=5$, 10, 20
under $M:=100$, 200 and
the signal-to-noise ratio (SNR) $10,20$ dB.
Table \ref{table:reduction_rate} summarizes the reduction rate (gain) of
firm shrinkage against hard/soft shrinkage.
Thanks to the continuous relaxation,
firm shrinkage gains 19.5--44.9 \% against hard shrinkage.
Compared to soft shrinkage, the gain is 5.3--37.9 \%.
%

% \clearpage

%  \begin{figure}[t!]
% \psfrag{x1}[Bc][Bc][.8]{$x_1$}
% \psfrag{x2}[Bc][Bc][.8]{$x_2$}

%  \centering
% \subfigure[$M=100$, SNR$=10$ dB]{
% \includegraphics[height=4cm]{M100_SNR10.eps}
% }\vspace*{-1em}

% \subfigure[$M=100$, SNR$=20$ dB]{
% \includegraphics[height=4cm]{M100_SNR20.eps}
% }\vspace*{-1em}

% \subfigure[$M=200$, SNR$=10$ dB]{
% \includegraphics[height=4cm]{M200_SNR10.eps}
% }\vspace*{-1em}

% \subfigure[$M=200$, SNR$=20$ dB]{
% \includegraphics[width=7cm]{M200_SNR20.eps}
% }\vspace*{-1em}

%   \caption{Illustrative examples in which ROWL fails but
%   eROWL works.
% The signal $\signal{x}_{\diamond}$ is depicted by a pentagram.}
% \vspace*{-1em}
%  \label{fig:simA}
%  \end{figure}

% \newpage

% \begin{table}
% \caption{System mismatches of the hard shrinkage and soft shrinkage
%  relative to the firm shrinkage.} 
% \centering
% \begin{tabular}{c|l|l}
%  & hard & soft \\
%  & $s=5$~~$s=10$~~$s=20$ & $s=5$~~$s=10$~~$s=20$ \\ \hline
% case (a) & ~1.24\hspace*{1.5em}1.34\hspace*{1.8em}1.34  &
% ~1.07\hspace*{1.5em}1.49\hspace*{1.8em}1.46 \\ 
% case (b) & ~1.57\hspace*{1.5em}1.80\hspace*{1.8em}1.73 & 
%     ~1.35\hspace*{1.5em}1.61\hspace*{1.8em}1.49
% \\ 
% case (c) & ~1.52\hspace*{1.5em}1.47\hspace*{1.8em}1.32  & 
%     ~1.65\hspace*{1.5em}1.53\hspace*{1.8em}1.27\\ 
% case (d) & ~1.63\hspace*{1.5em}1.62\hspace*{1.8em}1.43  
% & ~1.43\hspace*{1.5em}1.32\hspace*{1.8em}1.14 \\ 
% \end{tabular}

% \end{table}

 \begin{figure}[t!]
\psfrag{t}[Bc][Bc][.8]{$\tau$}

 \centering

\subfigure[$M=100$, SNR$=10$ dB]{
\includegraphics[height=4cm]{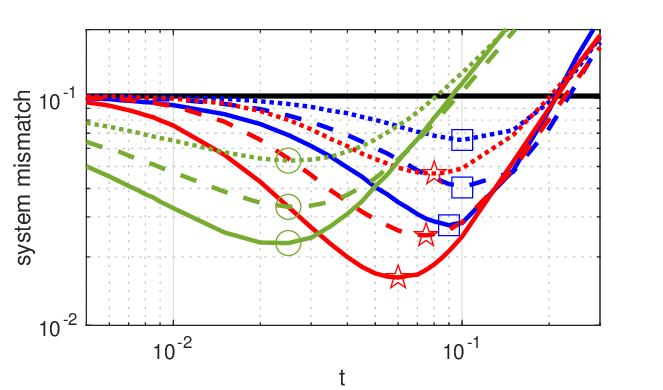}
}\vspace*{-1.1em}

\subfigure[$M=200$, SNR$=10$ dB]{
\includegraphics[height=4cm]{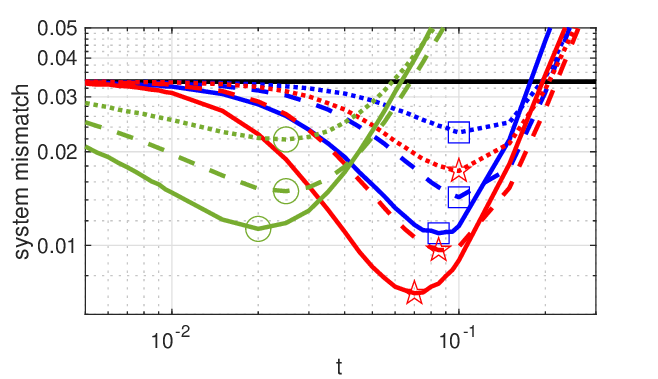}
}\vspace*{-1.1em}

\subfigure[$M=100$, SNR$=20$ dB]{
\includegraphics[height=4cm]{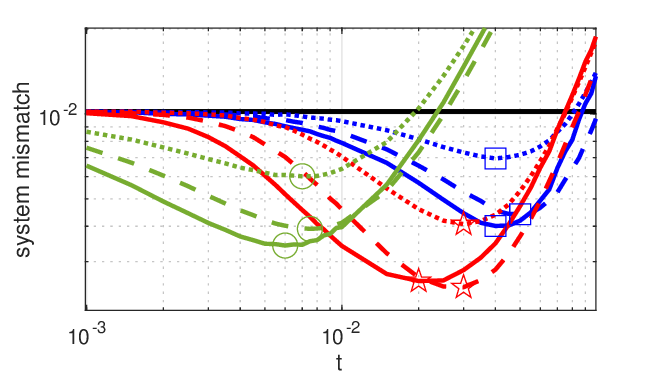}
}\vspace*{-1.1em}

\subfigure[$M=200$, SNR$=20$ dB]{
\includegraphics[width=7cm]{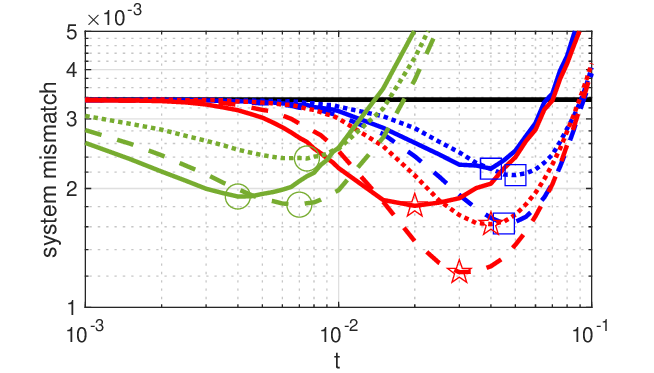}
}

  \caption{Comparisons of the iterative shrinkage algorithms
with hard shrinkage (blue), 
firm shrinkage (red), 
and soft shrinkage (green), where
the horizontal axis for firm shrinkage corresponds to $\tau_1$.
Solid, dashed, and dotted curves correspond to
sparsity level $s:=5$, $s:=10$, and $s:=20$, respectively.
Mark (square, pentagon, and circle) indicates
the best point (the lowest system mismatch) on each curve.
}
\vspace*{-1em}
 \label{fig:simulation1}
 \end{figure}

%\clearpage

\subsection{ROWL Shrinkage versus eROWL Shrinkage}
\label{subsec:rowl_erowl_simulation}

The operator $R_{\delta}$ is Lipschitz continuous
with constant $(1-\eta)^{-1} = (1+1/\delta)$
(see Theorem \ref{theorem:prox_conversion}),
where $\eta=1/(1+\delta)$.
To exploit \cite[Theorem 2]{yukawa_molgrad24},
let $\beta :=1-\eta = \delta/(1+\delta)$.
Then, the sequence $(\signal{x}_k)_{k\in\Natural}$ generated by
\eqref{eq:pfbs} converges to a minimizer (if exists) of 
$\mu f + \widetilde{\Omega}_{\signal{w}}/(\rho+1)$,
provided that 
(a) $\delta > (\kappa-\rho)/2\rho$
($\Leftrightarrow \beta  >(\kappa-\rho)/(\kappa+\rho)$)
and
(b) $\mu\in [(1-\beta)\rho,(1+\beta)/\kappa)$.
Thus, unless otherwise stated,
we set 
$\delta := \gamma_{\delta}(\kappa - \rho)/(2\rho)$
and
$\mu := \gamma_{\mu} (1-\beta)/\rho + (1-\gamma_{\mu})(1+\beta)/\kappa$
with the additional parameters
$\gamma_{\delta}> 1$ and $\gamma_{\mu} \in(0,1]$
fixed to $\gamma_{\delta}:=1.01$ and $\gamma_{\mu}:=0.5$.
In the following,
eROWL shrinkage $R_{\delta}$ is compared to 
ROWL shrinkage as well as firm shrinkage,
where $R_{\delta}$ in \eqref{eq:pfbs} is replaced by 
those other shrinkage operators.

\begin{table}
\caption{Reduction rate (\%) of firm shrinkage in system mismatch} 
\label{table:reduction_rate}
\centering
\begin{tabular}{c|l|l}
 & against hard shrinkage & against soft shrinkage\\
 & $s=5$~~$s=10$~~$s=20$ & $s=5$~~$s=10$~~$s=20$ \\ \hline
case (a) &
~41.1\hspace*{1.5em}38.5\hspace*{1.8em}28.7&
  ~29.5\hspace*{1.5em}25.1\hspace*{1.8em}12.1\\
 %   ~38.9\hspace*{1.5em}38.2\hspace*{1.8em}29.9&
 %   ~30.1\hspace*{1.5em}24.5\hspace*{1.8em}12.4\\ 
case (b) & 
  ~35.9\hspace*{1.5em}32.4\hspace*{1.8em}24.8&
   ~37.9\hspace*{1.5em}35.3\hspace*{1.8em}20.8\\
   % ~34.2\hspace*{1.5em}31.9\hspace*{1.8em}24.0& 
   % ~39.4\hspace*{1.5em}34.8\hspace*{1.8em}21.4\\ 
case (c) & 
   ~36.2\hspace*{1.5em}44.9\hspace*{1.8em}41.5&
   ~25.1\hspace*{1.5em}37.9\hspace*{1.8em}32.3\\
% ~36.2\hspace*{1.5em}44.3\hspace*{1.8em}42.2 & 
% ~25.9\hspace*{1.5em}38.0\hspace*{1.8em}33.0\\ 
case (d) & 
~19.5\hspace*{1.5em}24.7\hspace*{1.8em}24.9&
    \hspace*{.8em}5.3\hspace*{1.5em}32.5\hspace*{1.8em}31.8\\
% ~19.1\hspace*{1.5em}25.4\hspace*{1.8em}25.1 &
% \hspace*{.8em}6.6\hspace*{1.5em}32.9\hspace*{1.8em}31.3 \\ 
\end{tabular}

\end{table}

\subsubsection{Illustrative Examples}
\label{subsubsec:illustrative}

 \begin{figure}[t!]
\psfrag{x1}[Bc][Bc][.8]{$x_1$}
\psfrag{x2}[Bc][Bc][.8]{$x_2$}

\centering
\subfigure[ROWL]{
\includegraphics[height=5cm]{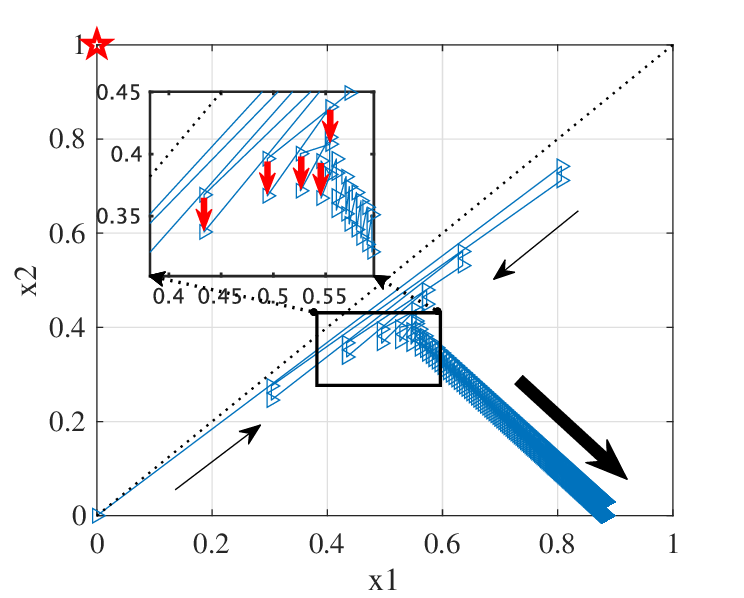}
}\vspace*{-1em}

\subfigure[eROWL]{
\includegraphics[height=5cm]{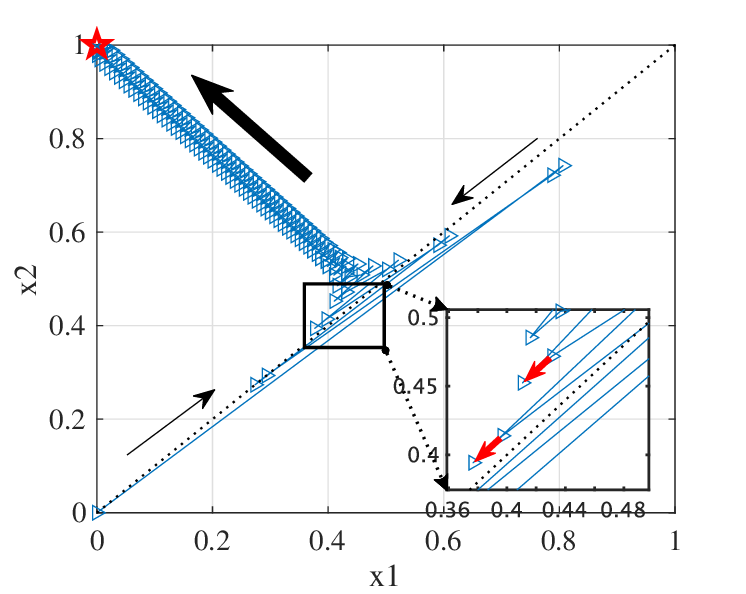}
}

  \caption{Illustrative examples in which ROWL fails but
  eROWL works.
The signal $\signal{x}_{\diamond}$ is depicted by a pentagram.}
\vspace*{-1em}
 \label{fig:simA}
 \end{figure}

Unlike the continuous eROWL shrinkage $R_{\delta}$,
ROWL shrinkage is discontinuous and its corresponding iterate has no
guarantee of convergence.
To illuminate the potential issue,
we consider 
the noiseless situation (i.e., $\signal{\epsilon}:=\signal{0}$)
with
the $2\times 2$ matrix 
$\signal{A}:=
\signal{Q}
\left[
\begin{tabular}{rr}
 $1$\!\! & $0$ \\
$0$\!\! & $0.1$
\end{tabular}
\right]
\signal{Q}^\top
$
(i.e., $M:=2$)
for
$\signal{Q}:= \left[
\begin{tabular}{rr}
 $1$\!\! & $-0.9$ \\
$0.9$\!\! & $1$
\end{tabular}
\right]
$
and the sparse signal
$\signal{x}_{\diamond} = [0,1.0]^\top$.
In this case, $\kappa=0.82$, $\rho=8.2\times 10^{-3}$.
For the sake of illustration,
the step size is set to $\mu:=2.0$ for both ROWL and eROWL,
the weight vector of ROWL is set to
$\signal{w}_{\rm ROWL}:=[0,0.03]^\top$, and
the weight vector $\signal{w}_{\rm eROWL}$ of eROWL is chosen, for
fairness, in such a way that 
$(\widetilde{\signal{w}}_{\rm eROWL}=)\signal{w}_{\rm eROWL}/(\delta+1)
= \signal{w}_{\rm ROWL}$ 
with $\delta=50.0$ (see Remark \ref{remark:weights}).
The convergence for eROWL
is guaranteed for every $\mu\in [1.6\times 10^{-4}, 2.4)$.
The algorithms are initialized to $\signal{x}_0:=\signal{0}$.

Figure \ref{fig:simA} plots the points
$\signal{x}_0,\signal{x}_{0.5},\signal{x}_{1},\signal{x}_{1.5},\signal{x}_{2},\cdots$,
where $\signal{x}_{k+1/2}:= \signal{x}_k - \mu\nabla f( \signal{x}_k)$,
$k\in\Natural$,
is the intermediate vector between $\signal{x}_k$ and $\signal{x}_{k+1}$.
The arrows in red color depict the displacement vectors
$\signal{x}_{k+1} - \signal{x}_{k+1/2}$ visualizing how each shrinkage operator works.
In view of \eqref{eq:prox_rowl},
the displacement vector for ROWL is given by $-\signal{w}$ basically,
guiding the estimate toward a wrong direction.
The ROWL iterate converges numerically to
$\signal{x}_{\infty}^{\rm ROWL}:=[0.88,0]^{\sf T}$, failing to identify
the active component.
In sharp contrast, 
the eROWL iterate converges to
$\signal{x}_{\infty}^{\rm eROWL}:=[0,0.99]^{\sf T}$,
identifying the active component correctly.
This is because 
the vector $\signal{w}_{\alpha}$ governing the displacement vector 
depends on the position of the current estimate.
More precisely, since the estimate at the early phase is located in the
neighborhood of the diagonal (where $x_1=x_2$), we have
$\signal{w}_{\alpha} \approx (1/2)(\signal{w} + \signal{w}_{\downarrow})= [1,1]^\top$,
which allows the estimate to be updated toward $\signal{x}_{\diamond}$.
We emphasize that this notable advantage comes from the
continuity of the eROWL shrinkage operator $R_{\delta}$.

 \begin{figure}[t!]
 % \psfrag{LS1}[Bl][Bl][.6]{LS ($M=4$)}
 % \psfrag{LS2}[Bl][Bl][.6]{LS ($M=8$)}
 % \psfrag{lasso1}[Bl][Bl][.6]{lasso ($M=4$)}
 % \psfrag{lasso2}[Bl][Bl][.6]{lasso ($M=8$)}
 % \psfrag{ROWL1}[Bl][Bl][.3]{ROWL($M=4$)}
 % \psfrag{ROWL2}[Bl][Bl][.6]{ROWL ($M=8$)}

% \psfrag{LS (M=4)}[Bl][Bl][.6]{LS ($M=4$)}
% \psfrag{LS (M=8)}[Bl][Bl][.6]{LS ($M=8$)}
% \psfrag{ROWL (M=4)}[Bl][Bl][.6]{ROWL ($M=4$)}
% \psfrag{ROWL (M=8)}[Bl][Bl][.6]{ROWL ($M=8$)}
% \psfrag{eROWL (M=4)}[Bl][Bl][.6]{eROWL ($M=4$)}
% \psfrag{eROWL (M=8)}[Bl][Bl][.6]{eROWL ($M=8$)}

% \psfrag{LS}[Bl][Bl][.6]{LS}
% \psfrag{ROWL (w=0.03)}[Bl][Bl][.6]{ROWL ($w_2=0.03$)}
% \psfrag{ROWL (w=0.01)}[Bl][Bl][.6]{ROWL ($w_2=0.01$)}
% \psfrag{eROWL (w=3)}[Bl][Bl][.6]{eROWL ($w_2=3$)}
% \psfrag{eROWL (w=1)}[Bl][Bl][.6]{eROWL ($w_2=1$)}
\psfrag{w}[Bl][Bl][.8]{$w_2$ ($\tau$)}

\centering
\subfigure[]{
\includegraphics[height=4cm]{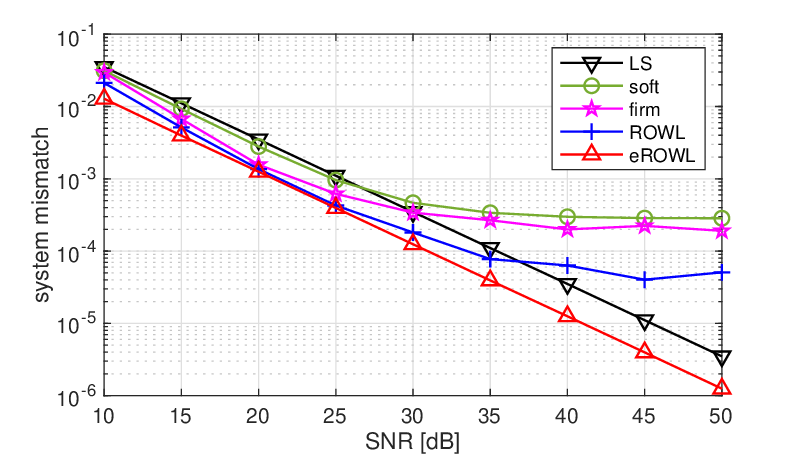}
}\vspace*{-.8em}

\subfigure[]{
\includegraphics[height=4cm]{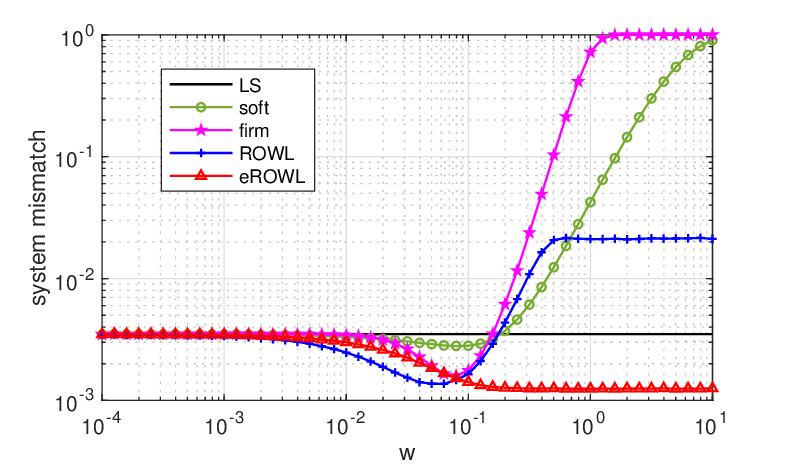}
}\vspace*{-.8em}

  \caption{Comparisons for fixed $\signal{x}_{\diamond}:=[0,1]^\top$.}

 \label{fig:randomA}
 \end{figure}

\subsubsection{ROWL versus eROWL}
\label{subsubsec:rowl_erowl}

We first compare the performance of ROWL and eROWL
with the matrix $\signal{A}$ generated randomly from i.i.d.~$\mathcal{N}(0,1)$
for $M:=8$ and 
with i.i.d.~zero-mean Gaussian $\signal{\epsilon}$.
For reference, soft shrinkage and firm shrinkage
as well as the least squares (LS) estimate, are also tested.
Two types of signal are considered:
(i) $\signal{x}_{\diamond} := [0,1]^{\top}$ (deterministic) and
(ii) $\signal{x}_{\diamond}:=[0,\xi]^\top$ (stochastic) with
$\xi$ generated randomly from $\mathcal{N}(0,1)$.
The results are averaged over 500,000 independent trials.

Figures \ref{fig:randomA} and \ref{fig:raodomx_firm} depict
the results of 
the deterministic case and the stochastic case, respectively.
In Figs.~\ref{fig:randomA}(a) and \ref{fig:raodomx_firm}(a), 
the performance for different SNRs is
plotted, where the weights of ROWL and eROWL are set to $\signal{w}=[0,w_2]^\top$
with the second weight $w_2$ is tuned individually by grid search under SNR 20 dB.
The threshold $\tau$ of soft shrinkage and $\tau_1$ of firm shrinkage
are also tuned individually under SNR 20 dB.
It can be seen that 
eROWL preserves good performance over the whole range, while 
the performance curves of soft, firm, and ROWL saturate as
SNR increases.

 \begin{figure}[t!]
\psfrag{x1}[Bc][Bc][.8]{$x_{\diamond,1}$}

% \psfrag{LS}[Bl][Bl][.6]{LS}
% \psfrag{ROWL}[Bl][Bl][.6]{ROWL}
% %  \psfrag{ROWL (w=0.3)}[Bl][Bl][.6]{ROWL ($w_2=0.3$)}
% % \psfrag{ROWL (w=0.1)}[Bl][Bl][.6]{ROWL ($w_2=0.1$)}
% \psfrag{eROWL}[Bl][Bl][.6]{eROWL}
% \psfrag{firm}[Bl][Bl][.6]{firm}
\psfrag{w}[Bl][Bl][.8]{$w_2$ ($\tau$)}

\centering
\subfigure[]{
\includegraphics[height=4cm]{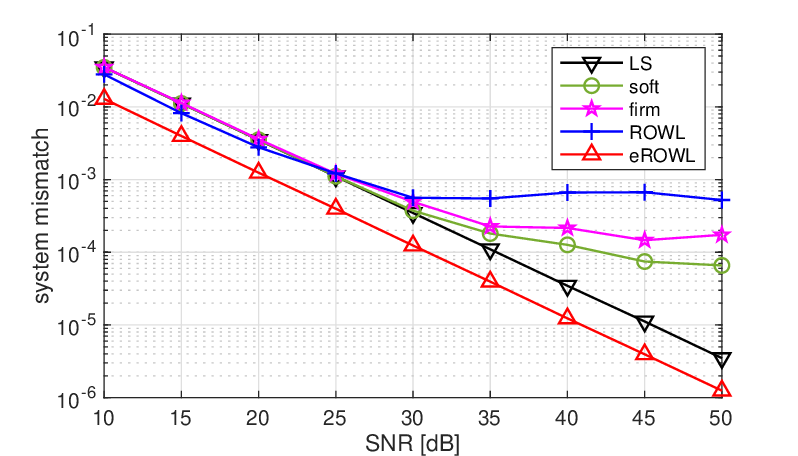}
}
\vspace*{-.8em}

\subfigure[]{
\includegraphics[height=4cm]{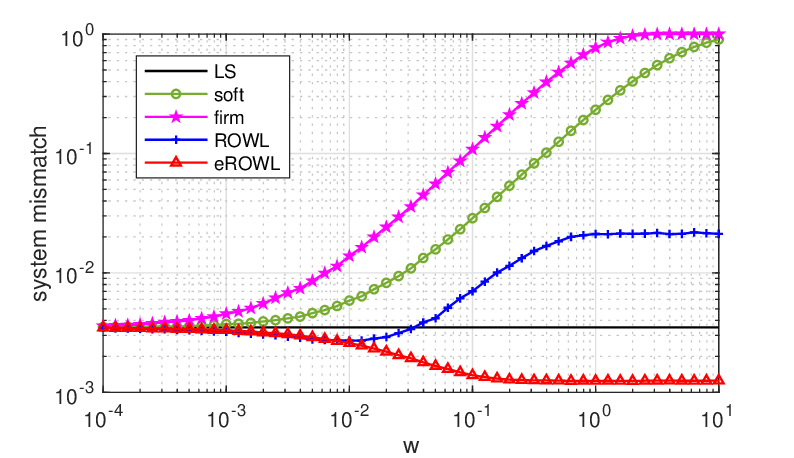}
}
\vspace*{-.8em}

  \caption{
Comparisons for
  $\signal{x}_{\diamond}:=[0,\xi]^\top$ where $\xi$ is random.}
 \label{fig:raodomx_firm}
 \end{figure}

It should also be mentioned that, in the stochastic case,
the performance of firm for low SNRs is nearly identical to that of LS.
This is because the threshold $\tau_1$ must fit the magnitude of the active
component of $\signal{x}_{\diamond}$
but this is difficult in this case as the magnitude changes 
at every trial randomly.
In sharp contrast, eROWL only depends on the ``number'' of active
component (but not on its ``magnitude''), and this is why it performs well.
The performance of ROWL for low SNRs is ony slightly better 
than that of LS
because of its discontinuity (see Section
\ref{sec:simulation}-\ref{subsec:rowl_erowl_simulation}.\ref{subsubsec:illustrative}).
In the deterministic case, although firm and ROWL
achieves comparable performance to eROWL under SNR 20 dB for which the
parameters of each shrinkage operator are tuned,
the performance of those shrinkage operators becomes worse significantly
as SNR becomes apart from 20 dB.

Figures \ref{fig:randomA}(b) and \ref{fig:raodomx_firm}(b)
show the impacts of the tuning parameters $w_2$ (or $\tau$) of
each shrinkage operator for SNR 20 dB.
It clearly shows the stable performance of eROWL which comes from
the continuity of the eROWL shrinkage operator.
This means that $w_2$ is easy to tune and also that
eROWL is expected to be robust against possible environmental changes.
In contrast, the performance of ROWL degrades as $w_2$ becomes larger than
the best value due to its discontinuity.
Note that soft/firm shrinkage for too large threshold level yields 
the zero solution for which the system mismatch is unity.

\section{Conclusion}

We presented the principled way of constructing a continuous relaxation
of a discontinuous shrinkage operator by leveraging
the proximal {\em inclusion} and {\em conversion}
(Theorems \ref{theorem:prox_inclusion} and \ref{theorem:prox_conversion}).
As its specific application, the continuous relaxation of the ROWL
shrinkage was derived.
Numerical examples showed the clear advantages of firm shrinkage and eROWL shrinkage
over hard shrinkage and ROWL shrinkage (the discontinuous counterparts),
demonstrating the efficacy of the continuous relaxation.
A specific situation was presented where the ROWL shrinkage
fails but eROWL shrinkage gives a good approximation of the true
solution.
The simulation results also indicated the potential advantages of eROWL
in terms of simplicity of parameter tuning as well as robustness against
environmental changes.
Although the present study of eROWL is limited to the two dimensional case,
its extension to an arbitrary (finite) dimensional case has been presented in
\cite{okuda_sip24},
where advantages over ROWL have also been shown.
We finally mention that 
the continuous relaxation approach is expected to be useful also for
other nonconvex regularizers such as the one proposed in \cite{wang24}.

\bibliographystyle{IEEEtran}
\bibliography{weaklyconvex}

%%%
\appendices
\newcounter{appnum}
\setcounter{appnum}{1}
\setcounter{equation}{0}
\renewcommand{\theequation}{\Alph{appnum}.\arabic{equation}}

\setcounter{figure}{0}
\renewcommand{\thefigure}{\Alph{appnum}.\arabic{figure}}

\section{Derivation of  $R_\delta(\signal{x}_2)$ for $\signal{x}_2\in\mathcal{C}_1$}
\label{appendix:derivation}

The representation of $R_{\delta}$ is visualized in
Fig.~\ref{fig:geometric_Rdelta}.
The halfspaces $\mathcal{H}_{\mbox{{\tiny $\backslash$}}}^-$ and
$\mathcal{H}_{\mbox{{\tiny $\backslash$}}}^+$ share the same boundary which is depicted
by the blue dotted line,
and the boundaries of the hyperslab $\mathcal{S}$ are depicted by 
the black dashed lines.
Figure \ref{fig:geometric_Rdelta}(b), specifically, illustrates the case in which 
$\mathsf{R}(\signal{m})$ touches the $x_1$ and $x_2$ axes.
This situation happens when $\signal{m}$ lies between $[w_1, w_1]^\top$
and $[w_2, w_2]^\top$, and it corresponds to the case of $\signal{x}\in\mathcal{C}_1$.
The point $\signal{p}_1$ is located on the $x_2$ axis, and its inverse image
$\mathsf{R}^{-1}(\signal{p}_1)$ is a set (the line segment in magenta color).
In this case, $\signal{p}_1$ can be expressed as
$\signal{p}_1 =R_{\delta}(\signal{x}_1) = (\signal{x}_1 - \frac{1}{\delta+1}\signal{w}_{\downarrow})_+$,
and it can also be expressed as
$\signal{p}_1 =R_{\delta}(\hat{\signal{x}}_1) =
(\hat{\signal{x}}_1 - \frac{1}{\delta+1}\signal{w}_{\downarrow})_+$.

We rephrase the inclusion relation 
(see Section \ref{sec:application}-\ref{subsec:ordered_weighted_L1})
\begin{equation}
 (\delta +1)\signal{x} \in
\mathsf{R}^{-1}(\signal{p}) + \delta \signal{p},
\label{eq:inclusion_xRinvp}
\end{equation}
which plays a key role in the derivation.
Let $\signal{m}:=\left[\begin{array}{c}
				      m\\m
					   \end{array}\right]$ 
%$[m,m]^\top$
for $m\in [w_1,w_2]$.
Then, $\signal{p}_1$, $\signal{p}_3\in\mathsf{R}(\signal{m})$
in Fig.~\ref{fig:geometric_Rdelta}(b)
can be expressed as
$\signal{p}_1= 
(\signal{m} -\signal{w}_{\downarrow})_+
=\left[\begin{array}{c}
				      0\\m-w_1
					   \end{array}\right]$
and 
$\signal{p}_3=(\signal{m} -\signal{w})_+
= \left[\begin{array}{c}
				      m-w_1\\0
					   \end{array}\right]$,
and thus their convex combination $\signal{p}_2$ is given
by 
\begin{equation}
 \signal{p}_2= \omega \signal{p}_1 + (1-\omega)\signal{p}_3
\label{eq:p2}
\end{equation}
for $\omega\in(0,1)$.
Since \eqref{eq:inclusion_xRinvp} implies that
\begin{equation}
  (\delta +1)\signal{x}_2 = 
\signal{m} + \delta \signal{p}_2,
\end{equation}
we obtain
\begin{align}
m=&~[(\delta+1)(x_1+x_2) + \delta w_1]/(\delta+2), \\
\omega=&~ \frac{(\delta+1)x_2-m}{\delta(m-w_1)}.
\label{eq:omega}
\end{align}
Substituting \eqref{eq:omega} into \eqref{eq:p2} yields
\begin{align}
\hspace*{-.4em} R_{\delta}(\signal{x}_2)=\signal{p}_2
= 
\left[
\begin{array}{c}
m-w_1\\
0
\end{array}
\right]
+
\dfrac{(\delta+1)x_2 - m}{\delta}
\left[
\begin{array}{c}
\!-1 \!\\
1
\end{array}
\right].
\end{align}
The halfspaces $\mathcal{H}_{\mbox{{\tiny $\slash$}} 1}^+$ and $\mathcal{H}_{\mbox{{\tiny $\slash$}} 2}^+$
are derived from the condition $\omega\in (0,1)$,
and $\mathcal{H}_{\mbox{{\tiny $\backslash$}}}^-$ comes from $m \leq w_2$.

The expression of $R_{\delta}(\signal{x})$ for $\signal{x}\in
\mathcal{C}_2$ can be derived analogously by using
$\signal{p}_1= \left[\begin{array}{c}
				      m-w_2\\m-w_1
					   \end{array}\right]$
and 
$\signal{p}_3= \left[\begin{array}{c}
				      m-w_1\\ m-w_2
					   \end{array}\right]$
for $m>w_2$.

%%%%%%

 \begin{figure*}[t!]
   \psfrag{0}[Bl][Bl][1]{$\signal{0}$}
   \psfrag{m}[Bl][Bl][1]{$\signal{m}$}
   \psfrag{m=}[Bl][Bl][1]{$\signal{m}=\mathsf{R}^{-1}(\signal{p}_2)$}
   \psfrag{inRinv}[Bl][Bl][1]{$\in \mathsf{R}^{-1}(\signal{p}_1)$}
   \psfrag{mhat}[Bl][Bl][1]{$\hat{\signal{m}}$}
   \psfrag{x}[Br][Br][1]{$\signal{x}_1$}
   \psfrag{xhat}[Bl][Bl][1]{$\hat{\signal{x}}_1$}
   \psfrag{p1}[Br][Br][1]{$\signal{p}_1 =R_{\delta}(\signal{x}_1) =R_{\delta}(\hat{\signal{x}}_1)$}
   \psfrag{p2}[Br][Br][1]{$\signal{p}_2=R_{\delta}(\signal{x}_2)$}
   \psfrag{p3}[Br][Br][1]{$\signal{p}_3$}
   \psfrag{x2vec}[Bl][Bl][1]{$\signal{x}_2$}
   \psfrag{Rm}[Br][Br][1]{$\mathsf{R}(\signal{m})$}
   \psfrag{Rinvp1}[Bl][Bl][1]{$\mathsf{R}^{-1}(\signal{p}_1)$}
   \psfrag{C1}[Bc][Bc][1]{$\mathcal{C}_1$}
   \psfrag{C2}[Bc][Bc][1]{$\mathcal{C}_2$}
   \psfrag{M}[Bc][Bc][1]{$\mathcal{M}$}
   \psfrag{-w}[Bc][Bc][1]{$-\signal{w}_{\downarrow}$}
   \psfrag{Tm}[Bc][Bc][1]{$\mathsf{R}(\signal{m})$}
   \psfrag{mmin}[Bl][Bl][1]{$\left[\begin{array}{c}
				      w_1\\ w_1
					   \end{array}\right]$}
   \psfrag{mmax}[Bl][Bl][1]{$\left[\begin{array}{c}
				      w_2\\ w_2
					   \end{array}\right]$}
   \psfrag{w1delta+1}[Bl][Bl][1]{$\dfrac{1}{\delta+1}\left[\begin{array}{c}
				      w_1\\ w_1
					   \end{array}\right]$}

   \psfrag{=}[Bl][Bl][1]{$= \mathsf{R}^{-1}(\hat{\signal{p}})$}

   \psfrag{x1}[Bl][Bl][1]{$x_1$}
   \psfrag{x2}[Br][Br][1]{$x_2$}
   \psfrag{p}[Bl][Bl][1]{\hspace*{-4em}$\signal{p} = R_{\delta}(\signal{x})$}
   \psfrag{xt}[Br][Br][1]{$\hat{\signal{x}}$}
   \psfrag{pt}[Br][Br][1]{$\hat{\signal{p}}$}
   \psfrag{pt}[Br][Br][1]{$\hat{\signal{p}} = R_{\delta}(\hat{\signal{x}})$}
   \psfrag{distrho}[Bl][Bl][1]{$\dfrac{(w_1+w_2)\delta}{\sqrt{2}(\delta+1)}$}
   \psfrag{dist1}[Bl][Bl][1]{$\dfrac{w_1+w_2}{\sqrt{2}(\delta+1)}$}
   \psfrag{dist2rho}[Br][Br][1]{$\dfrac{(w_2- w_1)\delta}{\sqrt{2}(\delta+1)}$}
   \psfrag{dist21}[Br][Br][1]{$\dfrac{w_2 - w_1}{\sqrt{2}(\delta+1)}$}
   \psfrag{w2}[Br][Br][1]{$w_2-w_1$}
   \psfrag{w2delta}[Br][Br][1]{$\dfrac{(w_2-w_1)\delta}{\delta+1}$}
 \centering

\subfigure[]{
\hspace*{.4cm}\includegraphics[height=9cm]{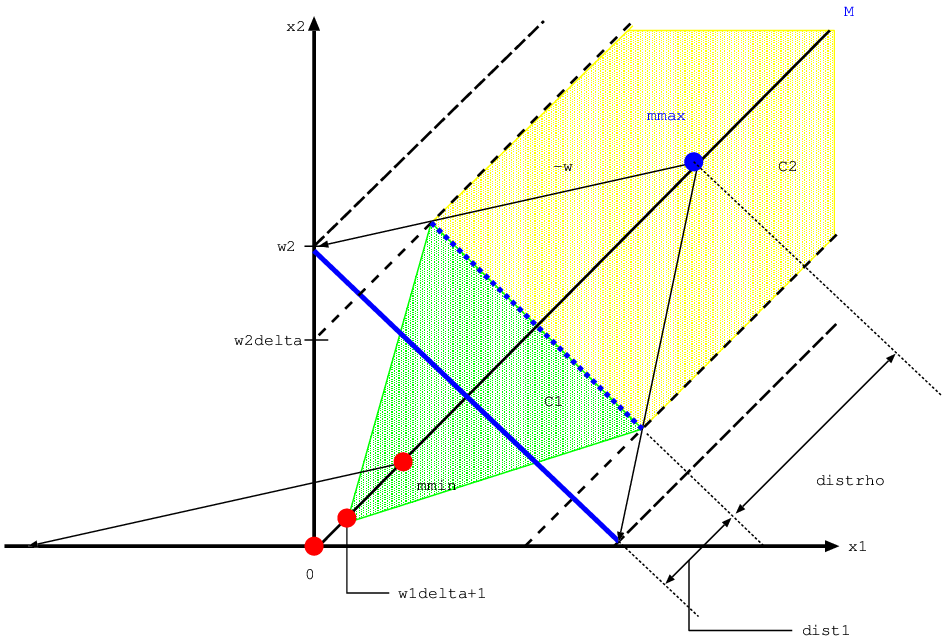}
}

\subfigure[]{
\hspace*{.4cm}\includegraphics[height=9cm]{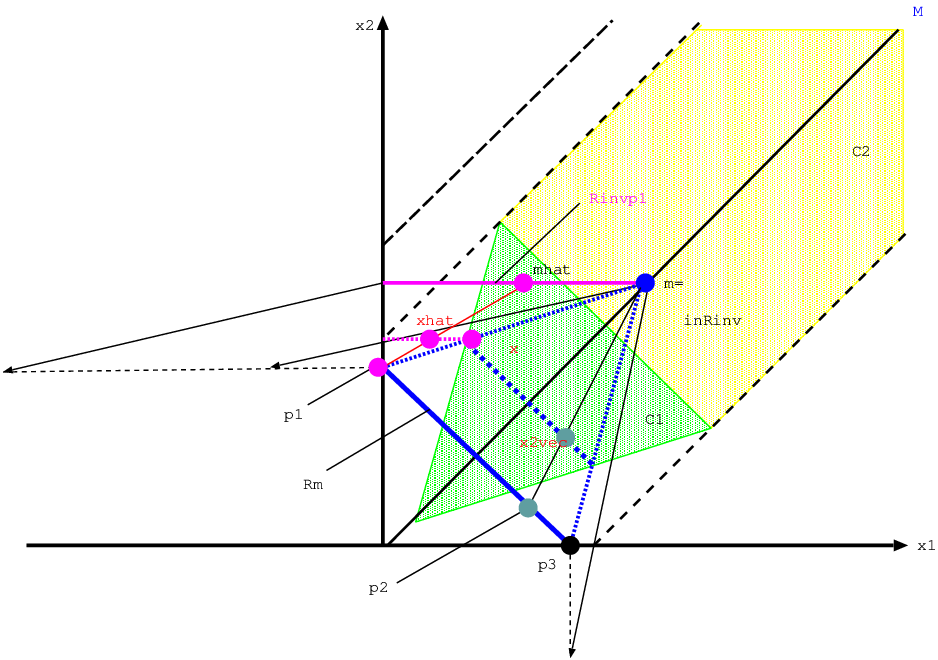}
}

  \caption{Geometric interpretations of the operator $R_{\delta}$.}
\vspace*{-.5em}
 \label{fig:geometric_Rdelta}
 \end{figure*}

\setcounter{appnum}{2}
\setcounter{equation}{0}
\renewcommand{\theequation}{\Alph{appnum}.\arabic{equation}}

\setcounter{figure}{0}
\renewcommand{\thefigure}{\Alph{appnum}.\arabic{figure}}

\section{Parameters of Firm Shrinkage}
\label{appendix:firm_parameter}

By \cite[Theorem 2]{yukawa_molgrad24},
the iterate given in \eqref{eq:pfbs} converges to
the minimizer of
$\mu f + \varphi$ under the conditions
$\beta\in((\kappa-\rho)/(\kappa+\rho),1)$ and
$\mu\in [(1-\beta)/\rho, (1+\beta)/\kappa)$
(see Fact \ref{fact:weaklyconvex_necsuffcondition} for the relation
between $\varphi$ and the given operator $T$).
Since ${\rm firm}_{\tau_1,\tau_2}$ is $\tau_2/(\tau_2-\tau_1)$-Lipschitz
continuous, we have
$\beta^{-1} = \tau_2/(\tau_2-\tau_1)
\Leftrightarrow
\beta=1-\tau_1/\tau_2$.
This leads to the condition 
$1-\tau_1/\tau_2>(\kappa-\rho)/(\kappa+\rho)
\Leftrightarrow 
\tau_2> \tau_1 (\kappa+\rho)/(2\rho)$.
To satisfy this inequality, we set $\tau_2:= \tau_1 (\kappa+\rho)/((2 -
\varepsilon)\rho)$ for a small constant $\varepsilon\in(0,2)$.
In this case, we have $\beta= (\kappa-(1-\varepsilon)\rho)/(\kappa+\rho)$,
and 
setting the step size $\mu$ to its lower bound gives
 $\mu= (1-\beta)/\rho \Leftrightarrow (2-\varepsilon)/(\kappa+\rho)$.

\begin{comment}
 
\clearpage

Explain in detail why the contribution of this manuscript is within the
scope of the IEEE Open Journal of Signal Processing

{\it
Aims \& Scope

The IEEE Open Journal of Signal Processing covers the enabling
technology for the generation, transformation, extraction, and
interpretation of information. It comprises the theory, algorithms with
associated architectures and implementations, and applications related
to processing information contained in many different formats broadly
designated as signals. Signal processing uses mathematical, statistical,
computational, heuristic, and/or linguistic representations, formalisms,
modeling techniques and algorithms for generating, transforming,
transmitting, and learning from signals.
}

\vspace*{2em}

Discontinuous shrinkage operators have a merit of no estimation bias but lack guarantee of algorithm convergence. This paper presents fundamental results enabling to convert discontinuous operators to continuous ones in a principled manner. This is linked to the recently-established mathematical framework for monotone Lipschitz-gradient operators. The author believes that the presented study will boost the research on those signals/data which involve sparseness, low rankness, and so on.

\end{comment}

% \begin{IEEEbiographynophoto}
% {THIRD C. AUTHOR JR.}~(Member, IEEE), photograph and biography not available
% at the time of publication.
% \end{IEEEbiographynophoto}

\vfill\pagebreak

\end{document}